\documentclass[12pt, reqno]{amsart}
\usepackage{amsmath, amsthm, amscd, amsfonts, amssymb, graphicx, color}
\usepackage{mathrsfs}
\usepackage{graphicx}
\usepackage{caption}
\usepackage{subcaption}
\usepackage{enumerate}
\usepackage[bookmarksnumbered, colorlinks, plainpages]{hyperref}
\hypersetup{colorlinks=true,linkcolor=red, anchorcolor=green, citecolor=cyan, urlcolor=red, filecolor=magenta, pdftoolbar=true}

\newtheorem{theorem}{Theorem}[section]

\newtheorem{question}{Question}[section]

\newtheorem{example}{Example}[section]
\newtheorem{lemma}{Lemma}[section]
\newtheorem{corollary}{Corollary}[section]
\newcommand\restr[2]{{% we make the whole thing an ordinary symbol
  \left.\kern-\nulldelimiterspace % automatically resize the bar with \right
  #1 % the function
  \littletaller % pretend it's a little taller at normal size
  \right|_{#2} % this is the delimiter
  }}

\newcommand{\littletaller}{\mathchoice{\vphantom{\big|}}{}{}{}}

\date{}
\begin{document}

\title [Iterated Aluthge transforms]{Iterated Aluthge transforms of some composition operators on weighted Bergman spaces}

\author[S. Lahiri, S. Ojha, R. Birbonshi]{Sudeshna Lahiri, Sarita Ojha, Riddhick Birbonshi}
	
\address[Lahiri]{Department of Mathematics, Indian Institute of Engineering Science and Technology, Shibpur, Howrah 711103, West Bengal, India}
\email{sudeshna.lahiri88@gmail.com}
	
\address[Ojha]{Department of Mathematics, Indian Institute of Engineering Science and Technology, Shibpur, Howrah 711103, West Bengal, India}
\email{sarita.ojha89@gmail.com}

\address[Birbonshi] {Department of Mathematics, Jadavpur University, Kolkata 700032, West Bengal, India}
\email{riddhick.math@gmail.com}

\subjclass[2020]{47B32, 47B33}

\keywords{Composition operator, Weighted composition operator, Aluthge transform, Weighted Bergman space.}

% \begin{center}
%     \Large{Iterated Aluthge transforms of some composition operators on weighted Bergman spaces}\\\vspace{0.5cm}\small{Sudeshna Lahiri$^1$, Sarita Ojha$^*$\footnote{Corresponding author. Email: sarita.ojha89@gmail.com\\
%     Email address: sudeshna.lahiri88@gmail.com (S. Lahiri), sarita.ojha89@gmail.com (S. Ojha), riddhick.math@gmail.com (R. Birbonshi)}, Riddhick Birbonshi$^2$}\\
%     $^1$ Department of Mathematics, Indian Institute of Engineering Science and Technology, Shibpur, Howrah 711103, India\\
%     $^2$ Department of Mathematics, Jadavpur University, Kolkata 700032, West Bengal, India.
%     \end{center}

\begin{abstract}
In this paper, we compute the iterated Aluthge transforms $\widetilde{C_\phi}^{(n)}$ of the composition operator $C_\phi$ on the weighted Bergman spaces $\mathcal{A}_\alpha^2(\mathbb{D})$, where $\phi(z)=az+(1-a)$ for $0<a<1$. Also, we obtain the norm and numerical radius of $\widetilde{C_\phi}^{(n)}$ on $\mathcal{A}_\alpha^2(\mathbb{D})$. We establish that $\widetilde{C_\phi}^{(n)}$ converges in the strong operator topology on $\mathcal{A}_\alpha^2(\mathbb{D})$. The purpose of this paper is to examine the results of \cite{jung2015iterated} for the weighted Bergman spaces $\mathcal{A}_\alpha^2(\mathbb{D})$. Additionally, by using the iterated Aluthge transforms of $C_\phi^*$ on $\mathcal{A}_\alpha^2(\mathbb{D})$, we derive the iterated Aluthge transforms of $C_\sigma$, where $\displaystyle\sigma(z)=\frac{az}{-(1-a)z+1}$ for $0<a<1$, on some weighted Hardy space $H^2(\beta_\alpha)$ and study its convergence. Finally, we raise some questions that emerge from these findings.
\end{abstract}

\maketitle

% Keywords: Composition operator, Weighted composition operator, Aluthge transform, Weighted Bergman space.\\
% Mathematics Subject Classification 2020: 47B32, 47B33

\section{Introduction}
 Let $H$ be a separable complex Hilbert space and $B(H)$ denote the algebra of all bounded linear operators on $H$. Then every operator $T \in B(H)$ has a unique polar decomposition $T=U|T|$ where $|T| = \sqrt{T^* T}$ and $U$ is the partial isometry satisfying ker$(T)$=ker$(U)$=ker$(|T|)$. Associated with $T$, there is a very useful operator $$\widetilde{T}=|T|^{1/2} U |T|^{1/2},$$ called the Aluthge transform of $T$ introduced by Aluthge \cite{aluthge1990p}. Also, we denote the iterated Aluthge transform of $T$ by 
 \begin{equation*}
     \widetilde{T}^{(0)} =T \mbox{ and } \widetilde{T}^{(n+1)} = \widetilde{(\widetilde{T}^{(n)})}
 \end{equation*} 
 for every non-negative integer $n$. Many researchers have studied this transform in the literature (see \cite{antezana2011iterated,bong2000aluthge, bong2003iterated, cowen2010hermitian, ito2004polar, jung2015iterated,muneo2002spectral}, etc.).\\

    % In this paper, we study the iterated Aluthge transform of composition operator $C_\phi$ and $C_\sigma$ on weighted Bergman space where $\phi(z)=az+(1-a)$ and $\sigma(z)=\frac{az}{-(1-a)z+1}$ for $0<a<1$.

Let $\mathbb{D}$ denote the open unit disk in the complex plane. The weighted Bergman space $\mathcal{A}_\alpha ^2(\mathbb{D)}$, for $\alpha>-1$, is defined by 
\begin{align*}
    \mathcal{A}_\alpha ^2(\mathbb{D)}=\left\{f \ \text{analytic in}\  \mathbb{D}:\|f\|_\alpha^2 = \int_\mathbb{D}|f(z)|^2 \frac{dA_\alpha(z)}{\pi} < \infty\right\},
\end{align*}
with $dA_\alpha(z)=(\alpha+1)(1-|z|^2)^\alpha dA(z)$ where $dA$ is the normalized area measure. When $\alpha=0$, the weighted Bergman space reduces to the classical Bergman space $\mathcal{A}^2(\mathbb{D)}$. For $f,g\in \mathcal{A}_\alpha ^2(\mathbb{D)}$, the inner product on $\mathcal{A}_\alpha^2(\mathbb{D})$ is given by
\begin{align*}
    \langle f,g\rangle= \int_\mathbb{D}f(z)\overline{g(z)}(\alpha+1)(1-|z|^2)^\alpha\frac{dA(z)}{\pi}.
\end{align*}
For any non-negative integer $n$, $\mathcal{A}_\alpha^2(\mathbb{D})$ has the orthonormal basis $\{e_n\}_{n=0}^\infty$ where
$$e_n(z)=\sqrt{\frac{\Gamma(n+\alpha+2)}{n!\Gamma(\alpha+2)}}z^n, \hspace{5mm} z\in \mathbb{D}.$$
Also for $\displaystyle f(z)=\sum_{n=0}^\infty a_n z^n$, $\displaystyle g(z)=\sum_{n=0}^\infty b_n z^n\in \mathcal{A}_\alpha ^2(\mathbb{D)}$, the inner product on $\mathcal{A}_\alpha^2(\mathbb{D})$ can also be expressed as 
$$\langle f,g\rangle=\sum_{n=0}^\infty\frac{n!\Gamma(\alpha+2)}{\Gamma(n+\alpha+2)}a_n \overline{b_n}.$$
It is well known that $\mathcal{A}_\alpha ^2(\mathbb{D)}$ is a reproducing kernel Hilbert space with reproducing kernel function given by
\begin{equation}\label{K_w}
    K_\omega(z)=\frac{1}{(1-\overline{\omega}z)^{\alpha+2}}
\end{equation} with 
\begin{equation}\label{norm K_w}
    \|K_\omega\|^2=\frac{1}{(1-|\omega|^2)^{\alpha+2}}.
\end{equation}
For more details about the weighted Bergman space, one may refer to \cite{hedenmalm2012theory}.
The Hardy-Hilbert space, denoted by $H^2$, consists of all analytic functions on $\mathbb{D}$ having power series representations with square-summable complex coefficients. The space $L^\infty(dA_\alpha)$ consists of all essentially bounded measurable functions $\phi$ on $\mathbb{D}$ such that
\begin{align*}
    \|\phi\|_{\infty,\alpha}=\sup\{c\geq 0 : A_\alpha(\{z\in \mathbb{D}:|\phi(z)|>c\})>0\}<\infty.
\end{align*} The space $H^\infty$ denotes the set of bounded and analytic functions on $\mathbb{D}$. For $f \in L^\infty(dA_\alpha)$, the Toeplitz operator $T_f$ is defined by $T_f h = P_\alpha(fh)$ for all $h \in \mathcal{A_\alpha}^2(\mathbb{D)}$, where $P_\alpha$ denotes the orthogonal projection of $L^2$ onto $\mathcal{A}_\alpha^2(\mathbb{D)}$. It is easy to see that $T_f$ is a bounded linear operator on $\mathcal{A}_\alpha ^2(\mathbb{D})$ with $\|T_f\|\leq \|f\|_{\infty,\alpha}$. If $f$ is analytic in $\mathbb{D}$, and $\phi$ is an analytic map of $\mathbb{D}$ into itself, the weighted composition operator on $\mathcal{A}_\alpha ^2(\mathbb{D)}$ with symbols $f$ and $\phi$ is defined by $$W_{f,\phi} h=f.(h \circ \phi)$$ for all $h$ in $\mathcal{A}_\alpha ^2(\mathbb{D)}$. For $f$ in $H^\infty$, the weighted composition operator $W_{f,\phi}$ is bounded on $\mathcal{A_\alpha}^2(\mathbb{D)}$. In particular, when $f=1$, $W_{f,\phi}$ becomes the composition operator $C_\phi$ and when $\phi$ is the identity map on $\mathbb{D}$, $W_{f,\phi}$ becomes the multiplication operator. For analytic self map $\phi$ on $\mathbb{D}$, $C_\phi$ is bounded on $\mathcal{A}_\alpha^2(\mathbb{D})$ by Lemma 2.3 of \cite{richman2003subnormality}. For any $f \in H^\infty$ and analytic self map $\phi$ in $\mathbb{D}$, the adjoint of $W_{f,\phi}=T_f C_\phi$ satisfies
\begin{equation*}
    W_{f,\phi}^*K_\omega=C_\phi ^* T_f ^* K_\omega=\overline{f(\omega)}K_{\phi(\omega)}
\end{equation*} for any $\omega \in \mathbb{D}$. In particular, $C_\phi^*K_\omega=K_{\phi(\omega)}$ and $T_f^*K_\omega=\overline{f(\omega)}K_\omega$.
Throughout this paper, we use the following properties of Toeplitz and composition operators:
If $\rho$ and $\tau$ are analytic self maps on $\mathbb{D}$, $\eta$ and $\zeta$ are functions in $H^\infty$, then 
\begin{equation*}
    C_\rho C_\tau=C_{\tau\circ \rho},\ T_\eta T_\zeta=T_{\eta.\zeta},\ C_\rho T_\eta=T_{\eta \circ \rho}C_\rho\ .
\end{equation*}
For more details on composition operator, we refer to the book \cite{cowen1995composition}.

%In \cite{cowen1988linear}, Cowen gave an expression for the adjoint formula of a composition operator on $H^2$ whose symbol is a linear fractional self map of $\mathbb{D}$. 
%\begin{theorem}\cite{cowen1988linear}\label{Theorem1.1}
  %  Let $\phi= \frac{az+b}{cz+d}$ be a linear fractional transformation mapping $\mathbb{D}$ into itself, where $ad-bc \neq 0$, then $\sigma(z)=\frac{\overline{a}z-\overline{c}}{-\overline{b}z +\overline{d}}$ maps $\mathbb{D}$ into itself, $\delta(z)=\frac{1}{-\overline{b}z +\overline{d}}$ and $h(z)=cz+d$ are in $H^\infty$, and $$C_\phi^* =T_\delta C_\sigma T_h^*.$$ 
%\end{theorem}

Throughout this article, we use two analytic self maps $\phi(z)$ and $\sigma(z)$ on $\mathbb{D}$ where $\phi(z)=az+(1-a)$ and $\sigma(z)=\frac{az}{-(1-a)z+1}$ for $0<a<1$.
In \cite{jung2015iterated}, Jung et al. investigated the iterated Aluthge transforms of the composition operators $C_\phi$ and $C_\sigma$ on $H^2$. Motivated by this, we examine their results in the weighted Bergman space $\mathcal{A}_\alpha^2(\mathbb{D})$ for $\alpha>-1$. In Section \ref{section2}, we express the iterated Aluthge transforms of $C_\phi$ on $\mathcal{A}_\alpha^2(\mathbb{D)}$ as some weighted composition operators. We also present results concerning the norm and numerical range of $\widetilde{C}_\phi^{(n)}$ for $n\in \mathbb{N}\cup\{0\}$. In Section \ref{section3}, we prove that the iterated Aluthge transform of $C_\phi$ on $\mathcal{A}_\alpha^2(\mathbb{D)}$ converges in the strong operator topology to a normal operator, but fails to converge in norm topology. In Section \ref{section4}, we compute the iterated Aluthge transform of $C_\sigma$ on some weighted Hardy space $H^2(\beta_\alpha)$ where
\begin{eqnarray*}
    \beta_\alpha(j) &=&\begin{cases}
            1,& j=0\\
            \sqrt{\frac{\Gamma(j+\alpha+1)}{{(j-1)!\Gamma(\alpha+2)}}},&j\geq 1.
        \end{cases}
\end{eqnarray*}
by using the iterated Aluthge transform of $C_\phi^*$ on $\mathcal{A}_\alpha^2(\mathbb{D)}$. Finally, in Section \ref{section5}, we outline some questions that arise naturally from our results.

\section{Iterated Aluthge transforms of $C_\phi$ on $\mathcal{A}_\alpha ^2(\mathbb{D)}$}\label{section2}
In this section, we derive the iterated Aluthge transform of $C_\phi$ on $\mathcal{A}_\alpha^2(\mathbb{D)}$ where $\phi(z)=az+(1-a)$ for $0<a<1$. With the help of this, we examine some classes of weighted composition operators on $\mathcal{A}_\alpha^2(\mathbb{D)}$.
In \cite{hurst1997relating}, Hurst has given an expression for the adjoint of a composition operator on $\mathcal{A}_\alpha^2(\mathbb{D})$ where the symbol is a linear fractional self map of $\mathbb{D}$ as follows:
\begin{theorem}\cite{hurst1997relating}\label{Theorem2.1}
    Let $\phi= \frac{az+b}{cz+d}$ be a linear fractional transformation mapping $\mathbb{D}$ into itself, where $ad-bc \neq 0$. Let $\alpha>-1.$ Then $\sigma(z)=\frac{\overline{a}z-\overline{c}}{-\overline{b}z +\overline{d}}$ maps $\mathbb{D}$ into itself, $\delta(z)=\frac{1}{(-\overline{b}z +\overline{d})^{\alpha+2}}$ and $h(z)=(cz+d)^{\alpha+2}$ are in $H^\infty$, and as operators on $\mathcal{A}_\alpha ^2(\mathbb{D)}$, $$C_\phi^* =T_\delta C_\sigma T_h^*.$$
\end{theorem}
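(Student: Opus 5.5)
The plan is to test both sides of the identity on reproducing kernels. Since finite linear combinations of the kernels $K_\omega$ in \eqref{K_w} are dense in $\mathcal{A}_\alpha^2(\mathbb{D})$ and both sides are bounded operators, it suffices to show $C_\phi^*K_\omega = T_\delta C_\sigma T_h^* K_\omega$ for every $\omega\in\mathbb{D}$. Before that, I will dispose of the preliminary claims. The map $\sigma$ is the conjugate of the inverse-transpose-type transform of $\phi$: with $M=\begin{pmatrix} a&b\\ c&d\end{pmatrix}$ representing $\phi$, the map $\sigma$ is represented by $\begin{pmatrix}\overline a&-\overline c\\-\overline b&\overline d\end{pmatrix}$. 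This is the adjoint of $M$ conjugated by $J=\mathrm{diag}(1,-1)$, up to the determinant. The standard fact that $\phi(\mathbb{D})\subseteq\mathbb{D}$ iff $M^*JM\le \lambda J$ in the indefinite sense, which is preserved under this adjoint operation, shows that $\sigma$ is a self-map. Next, since $\phi$ is a self-map, $\phi(0)=b/d\in\mathbb{D}$ gives $|b|<|d|$, so $-\overline b z+\overline d$ has no zero in $\overline{\mathbb{D}}$, and $\delta\in H^\infty$. The pole $-d/c$ of $\phi$ lies outside $\overline{\mathbb{D}}$, so $h$ is bounded. Branches of the power $\alpha+2$ are taken principal on these zero-free functions on a convex domain.

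For the kernel computation, the left side is $C_\phi^*K_\omega=K_{\phi(\omega)}$. On the right, $T_h^*K_\omega=\overline{h(\omega)}K_\omega$, and $C_\sigma K_\omega(z)=(1-\overline\omega\sigma(z))^{-(\alpha+2)}$. I then compute
\[
1-\overline\omega\sigma(z)=\frac{-\overline b z+\overline d-\overline\omega(\overline a z-\overline c)}{-\overline b z+\overline d}
=\frac{(\overline d+\overline c\,\overline\omega)-(\overline b+\overline a\,\overline\omega)z}{-\overline b z+\overline d}.
\]
Multiplying by $\delta$ cancels the denominator, and $\overline{h(\omega)}=(\overline c\,\overline\omega+\overline d)^{\alpha+2}$. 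Hence the right side equals
\[
\frac{(\overline c\,\overline\omega+\overline d)^{\alpha+2}}{\bigl((\overline d+\overline c\,\overline\omega)-(\overline b+\overline a\,\overline\omega)z\bigr)^{\alpha+2}}
=\frac{1}{\bigl(1-\overline{\phi(\omega)}z\bigr)^{\alpha+2}}=K_{\phi(\omega)}(z),
\]
as required.

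The main obstacle is the non-integer power $\alpha+2$. The identities $(uv)^{\alpha+2}=u^{\alpha+2}v^{\alpha+2}$ and $(u/v)^{\alpha+2}=u^{\alpha+2}/v^{\alpha+2}$ require consistent branches. I will handle this by noting that all quantities involved are nonvanishing and continuous in $(z,\omega)\in\mathbb{D}^2$ on a simply connected domain, so the two sides differ by a factor $e^{2\pi i k(\alpha+2)}$ with $k$ a locally constant integer. Evaluating at $z=0$, where both sides reduce to positive-real normalizations (for instance at $\omega=0$), pins $k=0$. Boundedness of $T_\delta$, $T_h$ and $C_\sigma$, the latter by the cited Lemma 2.3 of \cite{richman2003subnormality}, then lets the identity pass from kernels to the whole space.
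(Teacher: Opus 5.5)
Your argument is the standard proof of this result. The paper gives no proof of its own; it quotes the theorem from Hurst, and the argument there, like Cowen's on $H^2$, is exactly your kernel computation. The identity $(-\overline{b}z+\overline{d})(1-\overline{\omega}\sigma(z))=(\overline{c}\,\overline{\omega}+\overline{d})(1-\overline{\phi(\omega)}z)$ is correct. The bounds $|b|<|d|$ and $|c|<|d|$ do put $\delta,h$ in $H^\infty$. Density of the kernels together with boundedness of both sides then finishes it.

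One remark on the preliminaries concerns $\sigma(\mathbb{D})\subseteq\mathbb{D}$. There, the assertion that $M^*JM\le\lambda J$ survives the passage $M\mapsto JM^*J$ is the entire content of the claim: it is the fact that an invertible $J$-contraction has a $J$-contractive adjoint, and you leave it unproved. Also, $JM^*J$ is exactly the matrix of $\sigma$, with no determinant factor. A one-line alternative is $\sigma=\rho\circ\phi^{-1}\circ\rho$ with $\rho(z)=1/\overline{z}$. Since $\phi(\overline{\mathbb{D}})\subseteq\overline{\mathbb{D}}$, the map $\phi^{-1}$ sends $\widehat{\mathbb{C}}\setminus\overline{\mathbb{D}}$ into itself, and $\rho$ interchanges that set with $\mathbb{D}$.

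The step that actually fails is the branch bookkeeping, which you rightly single out as the main obstacle. Principal powers of $cz+d$ need not be analytic on $\mathbb{D}$, because the disc $d+c\,\mathbb{D}$ can meet $(-\infty,0)$. More importantly, evaluating at $z=\omega=0$ does not pin the constant to $1$; it computes it as $\delta(0)\overline{h(0)}$.

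Concretely, take $(a,b,c,d)=(1,0,0,-2)$. This gives $\phi=\sigma=-z/2$ and $C_\phi^*=C_\phi$, since it is diagonal with real eigenvalues. With principal branches, $T_\delta C_\sigma T_h^*=e^{-2\pi i(\alpha+2)}C_\phi^*$, which is false unless $\alpha\in\mathbb{Z}$. So your monodromy argument really shows that the identity holds exactly when the branches are compatible, i.e.\ $\delta(0)\overline{h(0)}=1$. This convention is implicit in the statement and has to be imposed, not derived.

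The simplest way to impose it:
\begin{enumerate}[(i)]
\item Rescale $(a,b,c,d)$ so that $d>0$. This changes neither $\phi$ nor $\sigma$, and for compatible branches it leaves $T_\delta C_\sigma T_h^*$ unchanged.
\item Then $\mathrm{Re}(cz+d)\ge d-|c|>0$ and $\mathrm{Re}(-\overline{b}z+d)\ge d-|b|>0$ on $\mathbb{D}$. So principal powers are analytic, and $\overline{h(\omega)}=(\overline{c}\,\overline{\omega}+d)^{\alpha+2}$.
\item All bases now lie in the right half-plane, so the principal arguments satisfy $\arg(\overline{c}\,\overline{\omega}+d)-\arg(-\overline{b}z+d)-\arg(1-\overline{\omega}\sigma(z))=-\arg(1-\overline{\phi(\omega)}z)$ exactly. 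Indeed, the left side lies in $(-3\pi/2,3\pi/2)$, the right side lies in $(-\pi/2,\pi/2)$, and they agree modulo $2\pi$.
\end{enumerate}
The power identities then hold pointwise, and your kernel computation goes through verbatim.
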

\noindent Using the previous theorem, we give the following lemma.
\begin{lemma}\label{lemma2.1}
    Let $\phi= \frac{az+b}{cz+d}$ be a linear fractional transformation mapping $\mathbb{D}$ into itself with $ad-bc \neq 0$. Then 
    $$C_\phi ^* T_r^*=T_\delta C_\sigma,$$ where $\sigma(z)=\frac{\overline{a}z-\overline{c}}{-\overline{b}z +\overline{d}}$, $\delta(z)=\frac{1}{(-\overline{b}z +\overline{d})^{\alpha+2}}$ and $r(z)=\frac{1}{(cz+d)^{\alpha+2}}$ as operators on $\mathcal{A}_\alpha ^2(\mathbb{D)}$. 
\end{lemma}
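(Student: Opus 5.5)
The plan is to multiply the identity of Theorem \ref{Theorem2.1} on the right by $T_r^*$, where $r = 1/h$, and then use that $T_h^* T_r^*$ is the identity. Theorem \ref{Theorem2.1} gives
$$C_\phi^* = T_\delta C_\sigma T_h^*, \qquad h(z)=(cz+d)^{\alpha+2}.$$
Composing on the right with $T_r^*$ yields
$$C_\phi^* T_r^* = T_\delta C_\sigma T_h^* T_r^* = T_\delta C_\sigma (T_r T_h)^*,$$
so everything reduces to showing $T_r T_h = T_{rh} = T_1 = I$.

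\textbf{Step 1: $r$ lies in $H^\infty$.} This is needed so that $T_r$ is bounded and the rule $T_\eta T_\zeta = T_{\eta\zeta}$ from the Introduction applies. Since $\phi$ is a linear fractional self-map of $\mathbb{D}$, its pole $-d/c$ (if $c\neq 0$) cannot lie in the closed disk. Indeed, if $|d/c|\le 1$ with $ad-bc\neq 0$, then $\phi$ would be unbounded near the pole, or have a pole on $\partial\mathbb{D}$ and so be unbounded on $\mathbb{D}$. Hence $|d|>|c|$, and $|cz+d| \ge |d|-|c|>0$ on $\mathbb{D}$. (When $c=0$ we need $d\neq 0$, which follows from $ad-bc\ne 0$.) Choosing the principal branch of $(cz+d)^{\alpha+2}$, which is legitimate since $cz+d$ omits a neighborhood of $0$ and stays in a half-plane-type region around $d$, we get that $r = (cz+d)^{-(\alpha+2)}$ is analytic and bounded, with $rh\equiv 1$.

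\textbf{Step 2: the Toeplitz identity.} For analytic bounded symbols, $T_\eta$ is just multiplication by $\eta$ on $\mathcal{A}_\alpha^2(\mathbb{D})$, because the projection $P_\alpha$ fixes analytic functions. Therefore $T_r T_h f = r h f = f$. Taking adjoints gives $T_h^*T_r^* = I$, and substituting this into the display above yields $C_\phi^*T_r^* = T_\delta C_\sigma$.

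\textbf{Alternative check on kernels.} One can also verify the identity directly on reproducing kernels, which span a dense set. On one side,
$$C_\phi^*T_r^*K_\omega=\overline{r(\omega)}\,K_{\phi(\omega)}.$$
On the other side, $T_\delta C_\sigma K_\omega$ can be computed explicitly, and the two sides match using the transformation of $1-\overline{\phi(\omega)}z$. This route is not needed, however, since Theorem \ref{Theorem2.1} already encodes that computation.

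The only real care point is Step 1: making sure the $(\alpha+2)$-th power, which is non-integral in general, is a well-defined bounded analytic branch with $rh=1$. This uses the nonvanishing of $cz+d$ on $\overline{\mathbb{D}}$ and the same branch conventions as in Theorem \ref{Theorem2.1}.
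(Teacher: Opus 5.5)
Your proposal is correct and is the paper's proof: right-multiply Hurst's identity $C_\phi^*=T_\delta C_\sigma T_h^*$ by $T_r^*$ and use $T_h^*T_r^*=(T_rT_h)^*=T_{rh}^*=I$. Your Steps 1 and 2 spell out facts the paper leaves implicit, namely $r\in H^\infty$ and that analytic Toeplitz operators act as multiplication.

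One small point: the principal branch is not always available (for instance when $\mathrm{Re}\, d<0$). This does not matter, because any analytic branch on the disk $d+c\mathbb{D}$, which omits $0$, works. All that is needed is $rh\equiv 1$, where $r$ uses the same branch as $h$ in Theorem \ref{Theorem2.1}.
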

\begin{proof}
    From Theorem \ref{Theorem2.1}, $C_\phi^* =T_\delta C_\sigma T_h^*,$ where $\sigma(z)=\frac{\overline{a}z-\overline{c}}{-\overline{b}z +\overline{d}}$, $\delta(z)=\frac{1}{(-\overline{b}z +\overline{d})^{\alpha+2}}$ and $h(z)=(cz+d)^{\alpha+2}$. Thus $C_\phi^*T_r^*=T_\delta C_\sigma T_h^*T_r^*=T_\delta C_\sigma
    $.
\end{proof}

Let $\textit{P}=\{t:Re (t)>0\}$ denote the right half plane. For $t$ in $\textit{P}$ and $\alpha>-1$, let
\begin{equation}\label{eq3}
 A_t=T_{f_t}C_{\phi_t} \ \mbox{where } f_t(z)=\frac{1}{(1+t-tz)^{\alpha+2}} \ \mbox{and } \ 
    \phi_t(z)=\frac{t+(1-t)z}{1+t-tz}.
\end{equation}
    
\noindent In Theorem 13 of \cite{cowen2013hermitian}, it is shown that $A_t$, for $t$ in $\textit{P}$, forms an analytic semigroup of weighted composition operators on $\mathcal{A}_\alpha^2(\mathbb{D)}$. The boundedness of $A_t$ for $t>0$ follows from Corollary 18 of \cite{cowen2013hermitian}.\\

\begin{lemma}\label{Lemma2.2}
     For any positive numbers $p$ and $s$, let $\sigma_s(z)=e^{-s}z+1-e^{-s}$ and $t=e^s-1$. Then 
     $$(C_{\sigma_s}^* C_{\sigma_s})^p=e^{ps(\alpha+2)}A_{pt} \ ,$$ 
     where $A_t=T_{f_t} C_{\phi_t}$ given by \eqref{eq3} and the composition operator $C_{\sigma_s}$ are defined on $\mathcal{A}_\alpha ^2(\mathbb{D)}$.
\end{lemma}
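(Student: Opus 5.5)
The plan is to first settle the case $p=1$ by a direct computation from Theorem \ref{Theorem2.1}. Then I will pass to arbitrary $p>0$ using the semigroup property of $\{A_t\}$ together with uniqueness of positive roots.

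\textbf{Step 1: the case $p=1$.} Write $\sigma_s(z)=\frac{e^{-s}z+(1-e^{-s})}{0\cdot z+1}$, so in the notation of Theorem \ref{Theorem2.1} we have $a=e^{-s}$, $b=1-e^{-s}$, $c=0$ and $d=1$. Then $h\equiv 1$, so $T_h^*=I$ and
$$C_{\sigma_s}^*=T_\delta C_{\sigma'},\qquad \delta(z)=\frac{1}{(1-(1-e^{-s})z)^{\alpha+2}},\qquad \sigma'(z)=\frac{e^{-s}z}{1-(1-e^{-s})z}.$$
Using $C_\rho C_\tau=C_{\tau\circ\rho}$, this gives $C_{\sigma_s}^*C_{\sigma_s}=T_\delta C_{\sigma_s\circ\sigma'}$.

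Now substitute $t=e^s-1$, so that $e^{-s}=\frac1{1+t}$ and $1-e^{-s}=\frac{t}{1+t}$. A short computation gives
$$\sigma'(z)=\frac{z}{1+t-tz}.$$
It then gives
$$\sigma_s\circ\sigma'(z)=\frac{z+t(1+t-tz)}{(1+t)(1+t-tz)}=\frac{(1+t)\bigl(t+(1-t)z\bigr)}{(1+t)(1+t-tz)}=\phi_t(z).$$
It also gives
$$\delta(z)=\frac{(1+t)^{\alpha+2}}{(1+t-tz)^{\alpha+2}}=e^{s(\alpha+2)}f_t(z).$$
Hence $C_{\sigma_s}^*C_{\sigma_s}=e^{s(\alpha+2)}A_t$. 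In particular, $A_t$ is a positive operator for every $t>0$, since every $t>0$ has the form $e^s-1$ for some $s>0$.

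\textbf{Step 2: rational $p$.} Since $(e^{s(\alpha+2)}A_t)^p=e^{ps(\alpha+2)}A_t^p$, it suffices to show $A_t^p=A_{pt}$. I will use the semigroup law $A_uA_v=A_{u+v}$ for $u,v\in P$ from Theorem 13 of \cite{cowen2013hermitian}. If needed, this law can be checked directly on kernels, since
$$A_u^*K_\omega=\overline{f_u(\omega)}K_{\phi_u(\omega)}$$
and $\phi_u\circ\phi_v=\phi_{u+v}$.

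For $p=m/n$, put $B=A_{mt/n}$. By Step 1, $B$ is positive. The semigroup law gives $B^n=A_{mt}=A_t^m$. Since $A_t^m$ is a positive operator, uniqueness of positive $n$-th roots gives $B=(A_t^m)^{1/n}=A_t^{m/n}$.

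\textbf{Step 3: arbitrary real $p>0$.} This extension by continuity is the main obstacle. By the continuous functional calculus, $p\mapsto A_t^p$ is norm continuous on $(0,\infty)$. For $p\mapsto A_{pt}$, I would first try weak continuity tested on kernels: for fixed $\omega,z$,
$$\langle A_{pt}^*K_\omega,K_z\rangle=\overline{f_{pt}(\omega)}\,\overline{K_{\phi_{pt}(\omega)}(z)},$$
and this is clearly continuous in $p$. Kernels span a dense set. The norms $\|A_{pt}\|=\|A_t^p\|$ agree on rationals and are therefore locally bounded, and $\|A_{pt}\|\le\|f_{pt}\|_\infty\|C_{\phi_{pt}}\|$ gives local bounds directly as well. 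With these bounds, weak continuity extends from kernels to all of $\mathcal{A}_\alpha^2(\mathbb{D})$. Two weakly continuous operator-valued functions that agree on the dense set of rational $p$ then agree for all $p>0$. This yields
$$(C_{\sigma_s}^*C_{\sigma_s})^p=e^{ps(\alpha+2)}A_{pt}.$$
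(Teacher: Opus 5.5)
Your proof is correct and follows the paper's route. Step 1 is exactly the paper's computation via Hurst's adjoint formula with $h\equiv 1$, giving $C_{\sigma_s}^*C_{\sigma_s}=e^{s(\alpha+2)}A_t$, and the passage to general $p$ rests on the same identity $A_t^p=A_{pt}$. The paper simply asserts this identity in one line ``by semigroup property,'' whereas you correctly note that $A_uA_v=A_{u+v}$ alone yields only integer powers, and you supply the missing justification: positivity of $A_t$ from Step 1, uniqueness of positive roots for rational $p$, and a kernel-wise continuity argument for real $p$. (In that last step no local norm bound is actually needed: for fixed $p$ and rationals $q_k\to p$, the entries $\langle A_{q_kt}K_\omega,K_z\rangle$ converge both to $\langle A_t^pK_\omega,K_z\rangle$, since $A_t^{q_k}\to A_t^p$ in norm, and to $\langle A_{pt}K_\omega,K_z\rangle$ pointwise. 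Boundedness of $A_t^p$ and $A_{pt}$ then forces them to be equal.)
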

\begin{proof}
    For $\sigma_s(z)=e^{-s}z+1-e^{-s}$, by Theorem \ref{Theorem2.1}, we have
    $$C_{\sigma_s}^*=T_gC_\psi T_h^*$$ where $g(z)=\frac{1}{(-(1-e^{-s})z+1)^{\alpha+2}}$, $\psi(z)=\frac{e^{-s}z}{-(1-e^{-s})z+1}$ and $h(z)=(0z+1)^{\alpha+2}=1$. So $T_h^*=I$ and hence
    $$C_{\sigma_s}^*C_{\sigma_s}=T_gC_\psi C_{\sigma_s}=T_g C_{\sigma_s \circ \psi}$$
    where 
    \begin{eqnarray*}
        \sigma_s \circ \psi &=& \frac{t+(1-t)z}{1+t-tz}=\phi_t(z)\\
        g(z) &=&\frac{(e^s)^{\alpha +2}}{(-(e^s-1)z+e^s)^{\alpha+2}}
        =\frac{e^{s(\alpha+2)}}{(-tz+t+1)^{\alpha+2}}=e^{s(\alpha+2)}f_t(z).
    \end{eqnarray*}
 Thus we have
    $$C_{\sigma_s}^*C_{\sigma_s}=T_g C_{\sigma_s \circ \psi}=e^{s(\alpha+2)}T_{f_t}C_{\phi_t}=e^{s(\alpha+2)}A_t.$$
    By semigroup property of $A_t$, we have $(A_t)^p=A_{pt}$, hence $(C_{\sigma_s}^* C_{\sigma_s})^p=e^{ps(\alpha+2)}A_{pt}$.
\end{proof}

\begin{lemma}\label{lemma2.3}
    For $\alpha>-1$ and $s>0$, the operator $$U=C_{\frac{(e^s+1)z+e^s-1}{(e^s-1)z+e^s+1}}T_{\left(\frac{(1-e^s)z+e^s+1}{2e^{s/2}}\right)^{\alpha+2}}\in B(\mathcal{A}_\alpha^2(\mathbb{D))}$$ is unitary.
\end{lemma}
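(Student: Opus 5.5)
The plan is to recognize $U$ as the standard unitary weighted composition operator induced by a disc automorphism. For $h\in\mathcal{A}_\alpha^2(\mathbb{D})$,
$$Uh=(w\circ\psi)\,(h\circ\psi),$$
where
$$\psi(z)=\frac{(e^s+1)z+e^s-1}{(e^s-1)z+e^s+1},\qquad w(u)=\Big(\frac{(1-e^s)u+e^s+1}{2e^{s/2}}\Big)^{\alpha+2}.$$
This follows from $C_\rho T_\eta=T_{\eta\circ\rho}C_\rho$. I will show three things in turn: $\psi$ is an automorphism of $\mathbb{D}$; the weight satisfies $w\circ\psi=(\psi')^{(\alpha+2)/2}$; and the operator $h\mapsto(\psi')^{(\alpha+2)/2}\,h\circ\psi$ is unitary.

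\textbf{Step 1: $\psi$ is an automorphism.} Write $D(z)=(e^s-1)z+e^s+1$. The map $\psi$ has the form $\frac{Az+B}{Bz+A}$ with $A=e^s+1>B=e^s-1>0$, so it is an automorphism of $\mathbb{D}$ fixing $\pm1$. Its derivative is
$$\psi'(z)=\frac{A^2-B^2}{D(z)^2}=\frac{4e^s}{D(z)^2}.$$
Since $\operatorname{Re} D>0$ on $\mathbb{D}$, the function $2e^{s/2}/D$ has positive real part there. Hence $(\psi')^{(\alpha+2)/2}:=\big(2e^{s/2}/D\big)^{\alpha+2}$ is well defined with the principal branch.

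\textbf{Step 2: identify the weight.} Compute $(1-e^s)\psi(z)+(e^s+1)$ over the common denominator $D(z)$. The coefficient of $z$ in the numerator is $(1-e^s)(e^s+1)+(e^s+1)(e^s-1)=0$. The constant term is $(e^s+1)^2-(e^s-1)^2=4e^s$. Therefore
$$\frac{(1-e^s)\psi(z)+e^s+1}{2e^{s/2}}=\frac{2e^{s/2}}{D(z)}.$$
Both sides lie in the right half plane, so raising to the power $\alpha+2$ with the principal branch gives $w\circ\psi=(\psi')^{(\alpha+2)/2}$. Consequently
$$Uh=(\psi')^{(\alpha+2)/2}\,(h\circ\psi).$$

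\textbf{Step 3: unitarity.} For a disc automorphism we have $1-|\psi(z)|^2=|\psi'(z)|(1-|z|^2)$. The change of variables $u=\psi(z)$ has Jacobian $|\psi'(z)|^2$. Together these give
$$\|Uh\|^2=\int_{\mathbb{D}}|h(\psi(z))|^2|\psi'(z)|^{\alpha+2}(1-|z|^2)^\alpha\,\frac{dA_\alpha\text{-const}\,dA(z)}{\pi}=\|h\|^2,$$
so $U$ is an isometry. For surjectivity, let $V$ be the analogous operator built from $\psi^{-1}$:
$$Vh=\big((\psi^{-1})'\big)^{(\alpha+2)/2}\,(h\circ\psi^{-1}).$$
The chain rule gives $(\psi')^{(\alpha+2)/2}\cdot\big((\psi^{-1})'\circ\psi\big)^{(\alpha+2)/2}=1$, so $UV=VU=I$. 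An invertible isometry is unitary.

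The main point requiring care is the branch of the non-integer power $\alpha+2$. The chain-rule identity holds for the $(\alpha+2)$-th powers because both factors are principal powers of functions with positive real part whose product is $1$. Here $\psi^{-1}(z)=\frac{Az-B}{-Bz+A}$, so the relevant function is $2e^{s/2}/(A-Bz)$, which also has positive real part. As an alternative to the explicit inverse, I could check $UU^*K_\omega=K_\omega$ on reproducing kernels using Theorem \ref{Theorem2.1}.
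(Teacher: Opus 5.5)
Your proof is correct, but it takes a different route from the paper's.

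The paper does not verify unitarity directly. It writes $\psi(z)=\xi(-z)$ with $\xi(z)=\frac{z-c}{cz-1}$ and $c=\frac{e^s-1}{e^s+1}$, so that $U=C_{-z}\,C_\xi T_w=C_{-z}\,T_{w\circ\xi}C_\xi$. It then computes $w\circ\xi=\bigl(\sqrt{1-c^2}/(1-cz)\bigr)^{\alpha+2}$ and cites two known results: the rotation operator $C_{-z}$ is unitary, and $T_{w\circ\xi}C_\xi$ is unitary (Lemma 11 of the 2013 paper on Hermitian weighted composition operators). Boundedness of $U$ is obtained from yet another citation.

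You instead keep $\psi$ intact and identify the weight $w\circ\psi$ with $(\psi')^{(\alpha+2)/2}$. You then prove from scratch, using $1-|\psi|^2=|\psi'|(1-|z|^2)$ and the change of variables, that $h\mapsto(\psi')^{(\alpha+2)/2}\,h\circ\psi$ is an isometry, and you exhibit its inverse explicitly. Your weight identity is really the same computation as the paper's evaluation of $w\circ\xi$, since $w(\psi(z))=(w\circ\xi)(-z)$.

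What each approach buys:
\begin{itemize}
\item The paper's argument is shorter, given the cited lemmas, but it depends on spotting the factorization through the involution $\xi$.
\item Your argument is self-contained and works verbatim for any disc automorphism.
\item Your argument gives boundedness for free, because an isometry is bounded.
\item Your argument handles the branch of the non-integer power $\alpha+2$ explicitly. The paper leaves this implicit, and your observation that both bases lie in the right half-plane is exactly what makes $w\circ\psi=(\psi')^{(\alpha+2)/2}$ and the inverse identity legitimate.
\end{itemize}
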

\begin{proof}
    Since $T_{\left(\frac{(1-e^s)z+e^s+1}{2e^{s/2}}\right)^{\alpha+2}}$ is a bounded analytic Toeplitz operator, the operator $U$ is bounded by Theorem 6.4 of \cite{cowen2010hermitian}. Let $$c=\frac{e^s-1}{e^s+1}\ \text{and}\ \xi(z)=\frac{z-c}{cz-1}.$$ Then 
    \begin{align*}
        \frac{(e^s+1)z+e^s-1}{(e^s-1)z+e^s+1}=\frac{z+\frac{e^s-1}{e^s+1}}{\frac{e^s-1}{e^s+1}z+1}=\frac{z+c}{cz+1}=\xi(-z).
    \end{align*}Thus, \begin{equation*}C_{\frac{(e^s+1)z+e^s-1}{(e^s-1)z+e^s+1}}=C_{\xi(-z)}=C_{-z}C_\xi\end{equation*} which implies
    \begin{align*}
        C_{\frac{(e^s+1)z+e^s-1}{(e^s-1)z+e^s+1}}T_{\left(\frac{(1-e^s)z+e^s+1}{2e^{s/2}}\right)^{\alpha+2}}&=C_{-z}C_\xi T_{\left(\frac{(1-e^s)z+e^s+1}{2e^{s/2}}\right)^{\alpha+2}}=C_{-z}(T_f C_\xi) 
    \end{align*}
    where 
    \begin{align*}
        f(z)&=\left(\frac{(1-e^s)\xi+e^s+1}{2e^{s/2}}\right)^{\alpha+2}\\
        &=\left(\frac{(1-e^s)\frac{z-\frac{e^s-1}{e^s+1}}{\frac{e^s-1}{e^s+1}z-1}+e^s+1}{2e^{s/2}}\right)^{\alpha+2}\\
        &=\left(\frac{(1-e^s)\frac{(e^s+1)z-(e^s-1)}{(e^s-1)z-(e^s+1)}+e^s+1}{2e^{s/2}}\right)^{\alpha+2}\\
        % &=\left(\frac{(1-e^{2s}+e^{2s}-1)z+(e^s-1)^2-(e^s+1)^2}{2e^{s/2}((e^s-1)z-(e^s+1))}\right)^{\alpha+2}\\
        % &=\left(\frac{-4e^s}{2e^{s/2}((e^s-1)z-(e^s+1)}\right)^{\alpha+2}\\
        &=\left(\frac{2e^{s/2}}{-(e^s-1)z+e^s+1}\right)^{\alpha+2}\\
        &=\left(\frac{\frac{2e^{s/2}}{e^s+1}}{-\frac{e^s-1}{e^s+1}z+1}\right)^{\alpha+2}\\
        &=\left(\frac{\sqrt{1-c^2}}{1-cz}\right)^{\alpha+2}.
    \end{align*}
    Now, $C_{-z}$ is unitary from Theorem 3.2 of \cite{jaoua2010isometric}. Since \begin{equation*}
        \xi(z)=\frac{z-c}{cz-1}=c+\frac{(c^2-1)z}{1-cz}
    \mbox{ and }
        f(z)=\frac{(1-c^2)^{\frac{\alpha+2}{2}}}{(1-cz)^{\alpha+2}}
    \end{equation*}
    for $0<c<1$, we conclude that $T_fC_\xi$ is unitary by Lemma 11 of \cite{cowen2013hermitian}. Therefore $U$ being the composition of two unitary operators is also unitary.    
\end{proof}
Now, we find the Aluthge transform of $C_\phi$ on $\mathcal{A}_\alpha ^2(\mathbb{D)}$ with symbol $\phi(z)=az+(1-a)$ for $0<a<1$.
\begin{theorem}\label{theorem2.2}
     Let $\phi(z)=az+(1-a)$ where $0<a<1$ and $\alpha>-1$. Then $C_\phi = U|C_\phi|$ is the polar decomposition of $C_\phi$ defined on $\mathcal{A}_\alpha ^2(\mathbb{D)}$ where
    $$|C_\phi|=T_{\left(\frac{2\sqrt{a}}{-(1-a)z+(1+a)}\right)^{\alpha+2}}C_{\frac{(-1+3a)z+(1-a)}{-(1-a)z+(1+a)}}$$
    and the unitary operator is given by
    \begin{equation}\label{U=CT}
        U=C_{\frac{(1+a)z+(1-a)}{(1-a)z+(1+a)}}T_{\left(\frac{(a-1)z+(1+a)}{2\sqrt{a}}\right)^{\alpha+2}} \ . 
    \end{equation}
    Also, the unitary operator may be expressed as 
    \begin{equation}\label{U=TC}
        U=T_{\left(\frac{2\sqrt{a}}{(1-a)z+(1+a)}\right)^{\alpha+2}}C_{\frac{(1+a)z+(1-a)}{(1-a)z+(1+a)}}
    \end{equation}
    with the Aluthge transform of $C_\phi$ given by
    $$\widetilde{C_\phi}=T_{\left(\frac{4a}{-(1-a)^2z+(1+a)^2}\right)^{\alpha+2}}C_{\frac{-(1+a)(1-3a)z+(1-a)(1+3a)}{-(1-a)^2z+(1+a)^2}} \ .$$
\end{theorem}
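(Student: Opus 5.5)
Set $a=e^{-s}$, i.e. $s=-\log a>0$, so that $\phi=\sigma_s$ and $t=e^s-1=(1-a)/a$. By Lemma \ref{Lemma2.2} with $p=1/2$,
$$|C_\phi|=(C_\phi^*C_\phi)^{1/2}=e^{s(\alpha+2)/2}A_{t/2}=a^{-(\alpha+2)/2}T_{f_{t/2}}C_{\phi_{t/2}}.$$
Substituting $t/2=(1-a)/(2a)$ into \eqref{eq3} gives
$$1+\tfrac t2-\tfrac t2 z=\frac{(1+a)-(1-a)z}{2a},\qquad
\phi_{t/2}(z)=\frac{(1-a)+(3a-1)z}{(1+a)-(1-a)z}.$$
The weight then becomes $\left(\frac{2\sqrt a}{-(1-a)z+(1+a)}\right)^{\alpha+2}$, which is the stated formula for $|C_\phi|$.

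For the partial isometry, note that with $e^s=1/a$ the operator of Lemma \ref{lemma2.3} becomes, after clearing the factor $a$ from numerator and denominator, exactly \eqref{U=CT}; so $U$ is unitary. It then suffices to check the identity $U|C_\phi|=C_\phi$. Once this holds, $C_\phi$ is the product of a unitary and a positive operator, so by uniqueness of the polar decomposition this is it (here $\ker C_\phi=\{0\}$ and the partial isometry is unitary).

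To verify $U|C_\phi|=C_\phi$, we use the intertwining rule $C_\rho T_\eta=T_{\eta\circ\rho}C_\rho$ to move the Toeplitz factor. Write $\rho(z)=\frac{(1+a)z+(1-a)}{(1-a)z+(1+a)}$. Then $U=C_\rho T_h$ with $h=\left(\frac{(a-1)z+(1+a)}{2\sqrt a}\right)^{\alpha+2}$, so $U=T_{h\circ\rho}C_\rho$. A short computation gives
$$(a-1)\rho(z)+(1+a)=\frac{4a}{(1-a)z+(1+a)},$$
hence $h\circ\rho=\left(\frac{2\sqrt a}{(1-a)z+(1+a)}\right)^{\alpha+2}$, which is \eqref{U=TC}. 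Next,
$$U|C_\phi|=T_{h\circ\rho}\,T_{k\circ\rho}\,C_\rho C_{\phi_{t/2}}=T_{(h\cdot k)\circ\rho}\,C_{\phi_{t/2}\circ\rho},$$
where $k$ is the weight of $|C_\phi|$. Since $\phi_{t/2}\circ\rho$ is a composition of Möbius maps, one computes it by $2\times2$ matrix multiplication and checks that it reduces to $az+1-a$. One also checks that $(h\circ\rho)\cdot(k\circ\rho)\equiv1$. The algebra behind this is that $-(1-a)\rho+(1+a)=\frac{4a}{(1-a)z+(1+a)}$, so $k\circ\rho$ is exactly the reciprocal of $h\circ\rho$. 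An alternative route is to verify $U^*C_\phi=|C_\phi|$, but the direct computation suffices.

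Finally, $\widetilde{C_\phi}=|C_\phi|^{1/2}U|C_\phi|^{1/2}$. By Lemma \ref{Lemma2.2} with $p=1/4$,
$$|C_\phi|^{1/2}=a^{-(\alpha+2)/4}A_{t/4}.$$
So we need the product
$$a^{-(\alpha+2)/2}\,T_{f_{t/4}}C_{\phi_{t/4}}\;T_{h\circ\rho}C_\rho\;T_{f_{t/4}}C_{\phi_{t/4}}.$$
Each $C$ is commuted past the next $T$ using $C_\rho T_\eta=T_{\eta\circ\rho}C_\rho$, which collects everything into a single $T_wC_\psi$. Here $\psi=\phi_{t/4}\circ\rho\circ\phi_{t/4}$ (note the order reversal in $C_\rho C_\tau=C_{\tau\circ\rho}$), and $w$ is a product of three $(\alpha+2)$-th powers of linear-fractional expressions. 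The main obstacle is this bookkeeping. I would do it with matrices: represent $\phi_{t/4}$ by $\begin{pmatrix}1-t/4 & t/4\\ -t/4 & 1+t/4\end{pmatrix}$ and $\rho$ by $\begin{pmatrix}1+a&1-a\\1-a&1+a\end{pmatrix}$. Each weight is then the $(\alpha+2)$-th power of a ratio of bottom-row entries, so the weight of a composition is governed by the bottom row of the product matrix, and the three factors telescope. Collapsing to the single claimed factor $\left(\frac{4a}{-(1-a)^2z+(1+a)^2}\right)^{\alpha+2}$ uses $t/4=(1-a)/(4a)$. The cocycle property of these weights under composition, plus matching constants, justifies taking $(\alpha+2)$-th powers termwise when $\alpha$ is non-integer, since all bases are nonvanishing on $\mathbb{D}$ with positive real part. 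The resulting symbol should be $\frac{-(1+a)(1-3a)z+(1-a)(1+3a)}{-(1-a)^2z+(1+a)^2}$.
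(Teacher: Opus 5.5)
Your proposal is correct and follows the paper's own route: Lemma \ref{Lemma2.2} with $p=1/2$ and $p=1/4$ gives $|C_\phi|$ and $|C_\phi|^{1/2}$, Lemma \ref{lemma2.3} with $e^s=1/a$ gives unitarity of $U$, then one checks $U|C_\phi|=C_\phi$ and collapses $a^{-(\alpha+2)/2}A_{t/4}UA_{t/4}$ into a single $T_wC_\psi$. Your explicit verifications fill in the computations the paper leaves as ``easy to check'': $\phi_{t/2}\circ\rho(z)=az+1-a$, the weight product being identically $1$ (indeed $h\cdot k\equiv 1$ already before composing with $\rho$), the kernel remark for uniqueness of the polar decomposition, and the telescoping of the weights via bottom rows of the matrix products.
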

\begin{proof}
    Putting $p=1/2$ in Lemma \ref{Lemma2.2} and by using \eqref{eq3}, we have
    \begin{align*}
        |C_\phi|=e^{s(\alpha+2)/2} T_{\frac{1}{\left(1+\frac{t}{2}-\frac{t}{2}z\right)^{\alpha+2}}}C_{\frac{\frac{t}{2}+(1-\frac{t}{2})z}{1+\frac{t}{2}-\frac{t}{2}z}}=\left(\frac{1}{a}\right)^{(\alpha+2)/2}T_{\frac{2^{\alpha+2}}{(t+2-tz)^{\alpha+2}}}C_{\frac{t+(2-t)z}{2+t-tz}}
    \end{align*}
    where $e^s=1/a$. Substituting $t=e^s-1=\frac{1}{a}-1$, we get
    $$|C_\phi|=T_{\left(\frac{2\sqrt{a}}{-(1-a)z+(1+a)}\right)^{\alpha+2}}C_{\frac{(-1+3a)z+(1-a)}{-(1-a)z+(1+a)}} \ .$$
    Also, from Lemma \ref{lemma2.3},  $$U=C_{\frac{(1+a)z+(1-a)}{(1-a)z+(1+a)}}T_{\left(\frac{(a-1)z+(1+a)}{2\sqrt{a}}\right)^{\alpha+2}}$$ is unitary. It is easy to check that $C_\phi = U|C_\phi|$. The operator $U$ can also be represented as 
    $$U=T_{\left(\frac{2\sqrt{a}}{(1-a)z+(1+a)}\right)^{\alpha+2}}C_{\frac{(1+a)z+(1-a)}{(1-a)z+(1+a)}}.$$
    Now, 
    \begin{eqnarray*}
\widetilde{C_\phi}&=&|C_\phi|^{1/2}U|C_\phi|^{1/2}\\
&=& e^{s(\alpha+2)/2}A_{t/4}UA_{t/4} \ \mbox{(from Lemma \ref{Lemma2.2})}\\
&=& T_{\left(\frac{4a}{-(1-a)^2z+(1+a)^2}\right)^{\alpha+2}}C_{\frac{-(1+a)(1-3a)z+(1-a)(1+3a)}{-(1-a)^2z+(1+a)^2}}.
    \end{eqnarray*}
\end{proof}
Now, we find the iterated Aluthge transform of $C_\phi$ on $\mathcal{A}_\alpha ^2(\mathbb{D)}$ with symbol $\phi(z)=az+(1-a)$ for $0<a<1$.
\begin{theorem}\label{theorem2.3}
     Suppose $\phi(z)=az+(1-a)$ with $0<a<1$ and $\alpha>-1$. For each $n\in \mathbb{N}\cup\{0\}$, the $n$th iterated Aluthge transform of $C_\phi$ on $\mathcal{A}_\alpha ^2(\mathbb{D)}$ is $$\widetilde{C_{\phi}}^{(n)}=T_{f} C_{\psi}$$ 
     where $f$ and $\psi$ are given by
     \begin{equation*} 
         \begin{split}
             f(z)&=\left(\frac{2^{n+1}a^n}{-(1-a)((1+a)^n -2^n a^n)z+ (1+a)((1-a)(1+a)^{n-1}+2^na^n)}\right)^{\alpha+2}, \\
             \psi(z)&=\frac{-(1+a)((1-a)(1+a)^{n-1}-2^na^n)z+(1-a)((1+a)^n+2^na^n)}{-(1-a)((1+a)^n -2^n a^n)z+ (1+a)((1-a)(1+a)^{n-1}+2^na^n)}.
         \end{split}
     \end{equation*}
\end{theorem}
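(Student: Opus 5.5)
Induction on $n$, using the key structural fact that every operator in question is a positive multiple of some $A_t$ composed with a unitary of the type in Lemma \ref{lemma2.3}.

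\emph{Base cases.} For $n=0$, the formulas reduce to $f\equiv 1$ and $\psi(z)=\frac{-(1+a)((1-a)/(1+a)-1)z+(1-a)\cdot 2}{-(1-a)\cdot 0\cdot z+(1+a)((1-a)/(1+a)+1)}=\frac{2az+2(1-a)}{2}=\phi(z)$, so $\widetilde{C_\phi}^{(0)}=C_\phi$. For $n=1$, the formulas reproduce Theorem \ref{theorem2.2}.

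\emph{Inductive strategy.} It is cleaner to carry a stronger statement through the induction. I would write $\widetilde{C_\phi}^{(n)}=U\,|\widetilde{C_\phi}^{(n)}|$ with the \emph{same} unitary $U$ from \eqref{U=CT} for every $n$, and
\[
|\widetilde{C_\phi}^{(n)}| = a^{-(\alpha+2)/2}A_{t_n}\quad\text{with } t_n>0 .
\]
Since $A_t$ is a positive semigroup (by Lemma \ref{Lemma2.2} it equals a positive power of $C_{\sigma_s}^*C_{\sigma_s}$), the Aluthge transform is then computable. Indeed, if $T=UP$ with $P=cA_{\tau}$ positive and $\ker P=\{0\}$ (each $A_t$ is injective, being a multiple of a power of $C_{\sigma_s}^*C_{\sigma_s}$ with $C_{\sigma_s}$ injective), then
\[
\widetilde T=c\,A_{\tau/2}UA_{\tau/2}.
\]

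\emph{The commutation relation.} The heart of the argument is a relation of the form
\[
UA_\tau U^*=A_{\tau'}
\]
for an explicit Möbius-type map $\tau\mapsto\tau'$ on $(0,\infty)$. Equivalently, $UA_\tau=A_{\tau'}U$, and this would be checked by composing the weighted composition operators using $C_\rho T_\eta=T_{\eta\circ\rho}C_\rho$ together with the composition rules. $U$ is (up to weight) the hyperbolic automorphism fixing $\pm1$, and $A_\tau$ is the parabolic-type semigroup fixing $1$. Conjugating a map that fixes $1$ by an automorphism fixing $1$ gives another such map, which is what makes the relation plausible.

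With this relation in hand,
\[
\widetilde T=c\,A_{\tau/2}A_{(\tau/2)'}U=c\,A_{\tau/2+(\tau/2)'}U=U\,c\,A_{\tau''},
\]
which is again of the form (unitary $U$)$\times$(positive multiple of $A_t$). By uniqueness of the polar decomposition, $U$ is the partial isometry and $cA_{\tau''}$ is the modulus. This yields a recursion $t_{n+1}=g(t_n)$, with $t_0=t=\frac1a-1$ coming from Theorem \ref{theorem2.2}. The constant $c$ may also need to change, and I would track it the same way.

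\emph{Main obstacle and finish.} The main obstacle is the algebra in the commutation relation and in solving the recursion in closed form. I expect the recursion to be linear-fractional, so that powers of a $2\times2$ matrix produce the terms $(1+a)^n$ and $2^na^n$ that appear in the statement. To close the induction, I would simply verify that the product $U\,(cA_{t_n})$, multiplied out as $T_fC_\psi$, matches the stated $f$ and $\psi$. If the induction can be phrased directly on $f$ and $\psi$, I would instead verify that one Aluthge step maps the level-$n$ data to the level-$(n+1)$ data. This is a routine but lengthy identity check in $a$ and $n$.
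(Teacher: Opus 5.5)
Your route is correct, but it is organized differently from the paper's.

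\textbf{The paper's route.} It inducts on the explicit symbols. At each step it:
\begin{itemize}
\item recomputes $(\widetilde{C_\phi}^{(k)})^*\widetilde{C_\phi}^{(k)}=a^{-(\alpha+2)}A_s$ via Hurst's adjoint formula (Lemma~\ref{lemma2.1});
\item checks, by composing symbols, that $U|\widetilde{C_\phi}^{(k)}|=\widetilde{C_\phi}^{(k)}$ for the fixed $U$ of \eqref{U=TC};
\item multiplies out $|\widetilde{C_\phi}^{(k)}|^{1/2}U|\widetilde{C_\phi}^{(k)}|^{1/2}$ by brute force.
\end{itemize}
The recursion is hidden in the substitution $s=\frac{(1-a)(1+a)^k}{2^ka^{k+1}}$.

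\textbf{Your route.} You carry the structural form $\widetilde{C_\phi}^{(n)}=c\,UA_{t_n}$ with $c=a^{-(\alpha+2)/2}$. The modulus is then read off from $(\widetilde{C_\phi}^{(n)})^*\widetilde{C_\phi}^{(n)}=c^2A_{2t_n}$ with no adjoint formula, and all the algebra collapses into one intertwining identity.

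That identity holds, and it is simpler than you anticipate. In the coordinate $\zeta=\frac{1+z}{1-z}$, the symbol of $U$ acts as $\zeta\mapsto\zeta/a$ and $\phi_\tau$ acts as $\zeta\mapsto\zeta+2\tau$. Composing with $C_\rho T_\eta=T_{\eta\circ\rho}C_\rho$ shows that $UA_\tau$ and $A_{a\tau}U$ have the same composition symbol and the same weight $\bigl(2\sqrt a/((1-a-2a\tau)z+1+a+2a\tau)\bigr)^{\alpha+2}$. So $UA_\tau=A_{a\tau}U$ exactly.

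Hence $\widetilde{C_\phi}^{(n+1)}=cA_{t_n/2}A_{at_n/2}U=cA_{(1+a)t_n/2}U=c\,UA_{(1+a)t_n/(2a)}$. The constant $c$ never changes, and the recursion is linear, $t_{n+1}=\frac{1+a}{2a}t_n$, not a genuine M\"obius map, so no matrix powers are needed.

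\textbf{One slip to fix.} Lemma~\ref{Lemma2.2} with $p=1/2$ gives $|C_\phi|=cA_{t/2}$ with $t=\frac1a-1$. So $t_0=\frac{1-a}{2a}$, not $\frac1a-1$. Then $t_n=\frac{(1-a)(1+a)^n}{2^{n+1}a^{n+1}}$, which is exactly the paper's $s/2$, and expanding $c\,UA_{t_n}$ reproduces the stated $f$ and $\psi$.

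\textbf{Comparison.} Your version explains where the factor $\bigl(\frac{1+a}{2a}\bigr)^n$ comes from. It also yields, as by-products, the $n$-independent unitary factor (used for binormality in Corollary~\ref{corollary2.2}) and the norm $a^{-(\alpha+2)/2}$. The paper's version is heavier, but it gives the explicit symbols of $|\widetilde{C_\phi}^{(k)}|$ at each stage without needing a separate intertwining lemma.
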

\begin{proof}
    We prove this theorem by induction. For $n=0$, $\widetilde{C_{\phi}}^{(0)}=T_1C_\phi$, hence true. Let us assume that 
    $$\widetilde{C_{\phi}}^{(k)}=T_{f} C_{\psi}$$ 
     where $f$ and $\psi$ are given by
     $$f(z)=\left(\frac{2^{k+1}a^k}{-(1-a)((1+a)^k -2^k a^k)z+ (1+a)((1-a)(1+a)^{k-1}+2^ka^k)}\right)^{\alpha+2},$$
     $$\psi(z)=\frac{-(1+a)((1-a)(1+a)^{k-1}-2^ka^k)z+(1-a)((1+a)^k+2^ka^k)}{-(1-a)((1+a)^k -2^k a^k)z+ (1+a)((1-a)(1+a)^{k-1}+2^ka^k)}$$
     for some non-negative integer $k$.
     It follows that
     \begin{eqnarray}\label{eq5}
         \left(\widetilde{C_{\phi}}^{(k)}\right)^* \widetilde{C_{\phi }}^{(k)} &=& (T_{f } C_{\psi })^*(T_{f } C_{\psi })\nonumber\\
         &=& C_{\psi }^* T_{f }^* T_{f } C_{\psi }\nonumber\\
         &=& T_{\delta } C_{\sigma } T_{f } C_{\psi }\hspace{1cm} \text{(from Lemma \ref{lemma2.1})}\nonumber\\
         &=& T_{\delta } T_{f  \circ \sigma } C_{\sigma } C_{\psi }\nonumber\\
         &=& T_{\delta .(f  \circ \sigma )} C_{\psi  \circ \sigma }
     \end{eqnarray}
     where, 
     $$\delta (z)=\left(\frac{2^{k+1}a^k}{-(1-a)((1+a)^k +2^k a^k)z+ (1+a)((1-a)(1+a)^{k-1}+2^ka^k)}\right)^{\alpha+2}$$
     and
     $$\sigma (z)=\frac{-(1+a)((1-a)(1+a)^{k-1}-2^ka^k)z+(1-a)((1+a)^k-2^ka^k)}{-(1-a)((1+a)^k +2^k a^k)z+ (1+a)((1-a)(1+a)^{k-1}+2^ka^k)}.$$
     Taking $s=\frac{(1-a)(1+a)^k}{2^k a^{k+1}}$, the functions appearing in the symbols have the following terms:
     \begin{align*}
         f (z)=&\left(\frac{2}{-(as -(1-a))z+as +(1+a)}\right)^{\alpha+2},\\
         \psi (z)=&\frac{-(as -(1+a))z+as +(1-a)}{-(as -(1-a))z+as +(1+a)},\\
         \delta (z)=&\left(\frac{2}{-(as +(1-a))z+as +(1+a)}\right)^{\alpha+2},\\
         \sigma (z)=&\frac{-(as -(1+a))z+as -(1-a)}{-(as +(1-a))z+as +(1+a)}.
     \end{align*}
     Now, \begin{equation*}
         \delta (z).(f  \circ \sigma )(z)=\frac{1}{a^{\alpha+2}}\left(\frac{1}{-s  z+1+s }\right)^{\alpha+2}\  \mbox{and }\
         (\psi  \circ \sigma )(z)=\frac{(1-s )z+s }{-s  z+1+s }\ .
     \end{equation*}
     Thus from \eqref{eq5}, we obtain, \begin{align}\label{eq6}
         \left(\widetilde{C_{\phi }}^{(k)}\right)^* \left(\widetilde{C_{\phi }}^{(k)}\right)=\frac{1}{a^{\alpha+2}}T_{(\frac{1}{-s  z+1+s })^{\alpha+2}}C_{\frac{(1-s )z+s }{-s  z+1+s }}=\frac{1}{a^{\alpha+2}}A_{s}
     \end{align}from \eqref{eq3} and hence
     \begin{equation}\label{eq7}
         |\widetilde{C_{\phi }}^{(k)}|=\frac{1}{\sqrt{a^{\alpha+2}}}A_{s }^{1/2}=\frac{1}{\sqrt{a^{\alpha+2}}}A_{\frac{s }{2}}=\frac{1}{\sqrt{a^{\alpha+2}}}T_{(\frac{2}{-s  z+2+s })^{\alpha+2}}C_{\frac{(2-s )z+s }{-s  z+2+s }}
     \end{equation}
     Consequently,
     $$|\widetilde{C_{\phi }}^{(k)}|^{1/2}=\frac{1}{a^{(\alpha+2)/4}}T_{(\frac{4}{-s  z+4+s })^{\alpha+2}}C_{\frac{(4-s )z+s }{-s  z+4+s }}.$$\\
     Now, we claim that $\widetilde{C_{\phi }}^{(k)}=U|\widetilde{C_{\phi }}^{(k)}|$ is the polar decomposition of $\widetilde{C_{\phi }}^{(k)}$ where $U$ is given by \eqref{U=TC}. For that we have,
     \begin{eqnarray*}
          U|\widetilde{C_{\phi }}^{(k)}| &=& \frac{1}{a^{(\alpha+2)/2}}T_{\left(\frac{2\sqrt{a}}{(1-a)z+(1+a)}\right)^{\alpha+2}}C_{\frac{(1+a)z+(1-a)}{(1-a)z+(1+a)}}T_{\left(\frac{2}{-s  z+2+s }\right)^{\alpha+2}}C_{\frac{(2-s )z+s }{-s  z+2+s }}\\
     &=& \frac{1}{a^{(\alpha+2)/2}}T_{\left(\frac{2\sqrt{a}}{(1-a)z+(1+a)}\right)^{\alpha+2}}T_{\left(\frac{2}{-s  \left(\frac{(1+a)z+(1-a)}{(1-a)z+(1+a)}\right)+2+s }\right)^{\alpha+2}}C_{\frac{(2-s )\left(\frac{(1+a)z+(1-a)}{(1-a)z+(1+a)}\right)+s }{-s  \left(\frac{(1+a)z+(1-a)}{(1-a)z+(1+a)}\right)+2+s }}\\
         &=& \frac{1}{a^{(\alpha+2)/2}}T_{\left(\frac{2\sqrt{a}}{(1-a)z+(1+a)}\right)^{\alpha+2}.\left(\frac{2}{-s  \left(\frac{(1+a)z+(1-a)}{(1-a)z+(1+a)}\right)+2+s }\right)^{\alpha+2}}C_{\frac{(2-s )\left(\frac{(1+a)z+(1-a)}{(1-a)z+(1+a)}\right)+s }{-s  \left(\frac{(1+a)z+(1-a)}{(1-a)z+(1+a)}\right)+2+s }}\ .
     \end{eqnarray*}
     Then the symbol of Toeplitz operator is
     \begin{align*}
         & \left(\frac{2\sqrt{a}}{(1-a)z+(1+a)}\right)^{\alpha+2}.\left(\frac{2}{-s  \left(\frac{(1+a)z+(1-a)}{(1-a)z+(1+a)}\right)+2+s }\right)^{\alpha+2}\\
         % =& \left(\frac{4\sqrt{a}}{-s (1+a)z-s (1-a)+(2+s )(1-a)z+(2+s )(1+a)}\right)^{\alpha+2}\\
         =&\left(\frac{4\sqrt{a}}{z(-s -as +2-2a+s -as )+(2+s +2a+as -s +as )}\right)^{\alpha+2}\\
         % =&\left(\frac{4\sqrt{a}}{z(-2as +2-2a)+(2as +2+2a)}\right)^{\alpha+2}\\
         =&\left(\frac{2\sqrt{a}}{-(as -(1-a))z+as  +(1+a)}\right)^{\alpha+2}
         \end{align*}
         and the symbol of composition operator is 
         \begin{align*}
             & \frac{(2-s )\left(\frac{(1+a)z+(1-a)}{(1-a)z+(1+a)}\right)+s }{-s  \left(\frac{(1+a)z+(1-a)}{(1-a)z+(1+a)}\right)+2+s }\\
             % =&\frac{(2-s )(1+a)z+(2-s )(1-a)+s (1-a)z+s (1+a)}{-s (1+a)z-s (1-a)+(2+s )(1-a)z+(2+s )(1+a)}\\
             =&\frac{z(2-s +2a-as +s -as )+(2-s -2a+as +s +as )}{z(-s -as +2+s -2a-as )+(-s +as +2+s +2a+as )}\\
             % =&\frac{z(-2as +2+2a)+(2as +2-2a)}{z(-2as +2-2a)+(2as +2+2a)}\\
             =&\frac{-(as -(1+a))z+as +(1-a)}{-(as -(1-a))z+as +(1+a)}.
         \end{align*}
         Thus\begin{align*}
             U|\widetilde{C_{\phi }}^{(k)}|=&\frac{1}{a^{(\alpha+2)/2}}T_{\left(\frac{2\sqrt{a}}{-(as -(1-a))z+as +(1+a)}\right)^{\alpha+2}}C_\frac{-(as -(1+a))z+as +(1-a)}{-(as -(1-a))z+as +(1+a)}\\=&T_{f } C_{\psi }=\widetilde{C_{\phi }}^{(k)}
         \end{align*}
     where $U=T_{\left(\frac{2\sqrt{a}}{(1-a)z+(1+a)}\right)^{\alpha+2}}C_{\frac{(1+a)z+(1-a)}{(1-a)z+(1+a)}}$ is unitary from Theorem \ref{theorem2.2}. Again,
\begin{eqnarray*}
    \text{ker}\left(\widetilde{C_{\phi }}^{(k)}\right) &=&\text{ker}\left(|\widetilde{C_{\phi }}^{(k)}|\right)\\
    &=&\text{ker}\left(\frac{1}{\sqrt{a^{\alpha+2}}}T_{(\frac{2}{-s  z+2+ s })^{\alpha+2}}C_{\frac{(2-s )z+s }{-s  z+2+s }}\right)\hspace{1cm} \text{(from \eqref{eq7})}\\
    &=&\{0\}.
\end{eqnarray*}
follows from the proof of Theorem 3.4 of \cite{jung2015iterated}.
% (Since, if $T_aC_bh=0$ for any $h \in \mathcal{A}_\alpha^2(\mathbb{D})$, then $a(z)h(b(z))=0$ for any $z \in \mathbb{D}$. If $a\in H^\infty \setminus \{0\}$, then $h(b(z))=0$ for any $z\in \mathbb{D}$. Since $b(\mathbb{D})$ is an open non-empty set by open mapping theorem, thus by identity theorem we have $h\equiv0$ on $\mathbb{D}$. Therfore ker$(T_aC_b)=\{0\}$ for $a\in H^\infty \setminus\{0\}$ and for analytic self map $b$ on $\mathbb{D}$.)\\
    %  \noindent Also from Theorem \ref{theorem2.2},
    % \begin{equation}\label{eq8}
    %     U= C_{\frac{(1+a)z+(1-a)}{(1-a)z+(1+a)}}T_{\left(\frac{(a-1)z+(1+a)}{2\sqrt{a}}\right)^{\alpha+2}}.
    % \end{equation}
    Again from \eqref{U=CT} of Theorem \ref{theorem2.2}, we have 
    \begin{eqnarray*}    \widetilde{C_{\phi}}^{(k+1)}&=&|\widetilde{C_{\phi}}^{(k)}|^{1/2} U |\widetilde{C_{\phi}}^{(k)}|^{1/2}\\
    &=&\left( \frac{1}{a^{(\alpha+2)/4}}T_{\left(\frac{4}{-sz+4+s}\right)^{\alpha+2}}C_{\frac{(4-s)z+s}{-sz+4+s}}\right)\left(C_{\frac{(1+a)z+(1-a)}{(1-a)z+(1+a)}}T_{\left(\frac{(a-1)z+(1+a)}{2\sqrt{a}}\right)^{\alpha+2}}\right)\\
    &&\times \left( \frac{1}{a^{(\alpha+2)/4}}T_{\left(\frac{4}{-sz+4+s}\right)^{\alpha+2}}C_{\frac{(4-s)z+s}{-sz+4+s}}\right)\\
    &=&\frac{1}{\sqrt{a^{\alpha+2}}}\left(T_{\left(\frac{4}{-sz+4+s}\right)^{\alpha+2}}C_{\frac{(4-s)z+s}{-sz+4+s}}\right)\left(C_{\frac{(1+a)z+(1-a)}{(1-a)z+(1+a)}}T_{\left(\frac{(a-1)z+(1+a)}{2\sqrt{a}}\right)^{\alpha+2}}\right)\\
    &&\times \left(T_{\left(\frac{4}{-sz+4+s}\right)^{\alpha+2}}C_{\frac{(4-s)z+s}{-sz+4+s}}\right).
    \end{eqnarray*}
    To simplify the notation, we temporarily set
    \begin{eqnarray*}
        && p(z)= \left(\frac{4}{-sz+4+s}\right)^{\alpha+2},\ b(z)=\frac{(4-s)z+s}{-sz+4+s},\\
        && c(z)=\frac{(1+a)z+(1-a)}{(1-a)z+(1+a)},\ d(z)=\left(\frac{(a-1)z+(1+a)}{2\sqrt{a}}\right)^{\alpha+2}.
    \end{eqnarray*}
    Thus, \begin{eqnarray*}
          \widetilde{C_{\phi}}^{(k+1)}
          &=&\frac{1}{\sqrt{a^{\alpha+2}}} T_{p}C_{b}C_cT_dT_{p}C_{b}\\
          &=& \frac{1}{\sqrt{a^{\alpha+2}}} T_{p} C_{c \circ b} T_{d.p} C_{b}\\
           &=& \frac{1}{\sqrt{a^{\alpha+2}}} T_{p} T_{(d.p)\circ(c \circ b)} C_{c \circ b} C_{b} \\
           &=& \frac{1}{\sqrt{a^{\alpha+2}}} T_{p.((d.p)\circ (c\circ b))} C_{b \circ c \circ b}
      \end{eqnarray*}
      On calculation, we get
      $$(p.((d.p)\circ (c\circ b)))(z)=\left(\frac{4\sqrt{a}}{-(s(1+a)-2(1-a))z+s(1+a)+2(1+a)}\right)^{\alpha+2}$$
      $$(b \circ c \circ b)(z)=\frac{-(s(1+a)-2(1+a))z+s(1+a)+2(1-a)}{-(s(1+a)-2(1-a))z+s(1+a)+2(1+a)}.$$
      Substituting $s=\frac{(1-a)(1+a)^k}{2^k a^{k+1}}$, the Toeplitz symbol and the composition symbol take the form
      $$\left(\frac{2^{k+2}a^{k+1}\sqrt{a}}{-(1-a)((1+a)^{k+1} -2^{k+1} a^{k+1})z+ (1+a)((1-a)(1+a)^{k}+2^{k+1}a^{k+1})}\right)^{\alpha+2}$$and
      $$\frac{-(1+a)((1-a)(1+a)^{k}-2^{k+1}a^{k+1})z+(1-a)((1+a)^{k+1}+2^{k+1}a^{k+1})}{-(1-a)((1+a)^{k+1} -2^{k+1} a^{k+1})z+ (1+a)((1-a)(1+a)^{k}+2^{k+1}a^{k+1})}$$
      respectively. Therefore, we have our proof.
\end{proof}
Using the above theorem and combining Lemma \ref{lemma2.1}, we can express $\widetilde{C_{\phi}}^{(n)}$ as the adjoint of some weighted composition operator as follows:
\begin{corollary}\label{Corollary2.2}
    Suppose $\phi(z)=az+(1-a)$ with $0<a<1$ and $\alpha>-1$. For each $n\in\mathbb{N}\cup\{0\}$, the $n$th iterated Aluthge transform of $C_\phi$ on $\mathcal{A}_\alpha^2 (\mathbb{D})$ is given by $$\widetilde{C_{\phi}}^{(n)}=C_{\Psi}^* T_{F}^*$$ 
     where $$F(z)=\left(\frac{2^{n+1}a^n}{-(1-a)((1+a)^n +2^n a^n)z+ (1+a)((1-a)(1+a)^{n-1}+2^na^n)}\right)^{\alpha+2},$$
     and
     $$\Psi(z)=\frac{-(1+a)((1-a)(1+a)^{n-1}-2^na^n)z+(1-a)((1+a)^n-2^na^n)}{-(1-a)((1+a)^n +2^n a^n)z+ (1+a)((1-a)(1+a)^{n-1}+2^na^n)}.$$
    
\end{corollary}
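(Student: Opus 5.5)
The idea is to read off $\widetilde{C_{\phi}}^{(n)}=T_fC_\psi$ from Theorem \ref{theorem2.3} and rewrite it with Lemma \ref{lemma2.1}, applied to a suitably chosen linear fractional map. Lemma \ref{lemma2.1} says that for a linear fractional self-map $\Psi(z)=\frac{Az+B}{Cz+D}$ we have
$$C_\Psi^*T_r^*=T_\delta C_\sigma,$$
where
$$\sigma(z)=\frac{\overline{A}z-\overline{C}}{-\overline{B}z+\overline{D}},\qquad \delta(z)=\frac{1}{(-\overline{B}z+\overline{D})^{\alpha+2}},\qquad r(z)=\frac{1}{(Cz+D)^{\alpha+2}}.$$
So it suffices to find $\Psi$ with $\sigma=\psi$ and $\delta=$ a scalar multiple of $f$. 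The scalar can then be absorbed into $r$, giving $F=\lambda r$ and $T_F^*=\overline{\lambda}T_r^*$ with $\lambda$ real.

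Write $\psi=\frac{a_1z+b_1}{c_1z+d_1}$ using the numerator and denominator from Theorem \ref{theorem2.3}. All coefficients are real for $0<a<1$. Matching $\sigma=\psi$ gives $A=a_1$, $C=-b_1$, $B=-c_1$, $D=d_1$, so
$$\Psi(z)=\frac{a_1z-c_1}{-b_1z+d_1}.$$
Substituting the explicit values, $a_1=-(1+a)((1-a)(1+a)^{n-1}-2^na^n)$ and $-c_1=-(1-a)((1+a)^n-2^na^n)$; the latter is $(1-a)((1+a)^n-2^na^n)$ up to the sign convention. The denominator becomes $-(1-a)((1+a)^n+2^na^n)z+(1+a)((1-a)(1+a)^{n-1}+2^na^n)$. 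This is exactly the $\Psi$ in the statement, possibly after multiplying numerator and denominator by $-1$ where needed. I will carry out this sign bookkeeping carefully, since the signs must align with the stated formula.

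With this choice, $\delta(z)=(c_1z+d_1)^{-(\alpha+2)}$, and this equals $(2^{n+1}a^n)^{-(\alpha+2)}f(z)$. Hence
$$T_f C_\psi = (2^{n+1}a^n)^{\alpha+2}\,T_\delta C_\sigma=(2^{n+1}a^n)^{\alpha+2}\,C_\Psi^*T_r^*=C_\Psi^*T_F^*,$$
where $F(z)=(2^{n+1}a^n)^{\alpha+2}(-b_1z+d_1)^{-(\alpha+2)}$. This matches the stated $F$, because its denominator is the same as that of $\Psi$.

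The main obstacle is verifying the hypotheses of Lemma \ref{lemma2.1}. One must check that $\Psi$ is a linear fractional self-map of $\mathbb{D}$ with nonzero determinant. The determinant of $\Psi$ equals that of $\psi$ and is therefore nonzero. For the self-map property, I would use the real-parameter form from the proof of Theorem \ref{theorem2.3}: with $s=\frac{(1-a)(1+a)^n}{2^na^{n+1}}$, $\Psi$ is exactly the map $\sigma$ appearing there in the form involving $as\pm(1\pm a)$. That map already arises from Theorem \ref{Theorem2.1} as the adjoint symbol of the self-map $\psi$, so it maps $\mathbb{D}$ into itself. Alternatively, one can check directly that $\Psi$ sends $[-1,1]$ into $[-1,1]$ and maps $\mathbb{D}$ into the disk with that real diameter, using the symmetry of real-coefficient linear fractional maps. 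The same argument shows that the relevant Toeplitz symbols are bounded on $\mathbb{D}$, so $T_F$ is bounded.
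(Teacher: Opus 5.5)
Your proposal is correct and is essentially the paper's argument: you read off $\widetilde{C_\phi}^{(n)}=T_fC_\psi$ from Theorem \ref{theorem2.3} and apply Lemma \ref{lemma2.1}. Your $\Psi$ and $F$ are exactly the $\sigma$ and $\delta$ already produced in the proof of Theorem \ref{theorem2.3}, so you could equally take adjoints in $C_\psi^*T_f^*=T_FC_\Psi$, which also gives the self-map property of $\Psi$ for free via Theorem \ref{Theorem2.1}.

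One bookkeeping correction: since $c_1=-(1-a)((1+a)^n-2^na^n)$, you get $-c_1=(1-a)((1+a)^n-2^na^n)$ exactly, so the numerator of $\Psi$ matches the statement with no sign adjustment. Note also that multiplying numerator and denominator by $-1$ could not have repaired a sign discrepancy in a single coefficient anyway.
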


An operator $T=U|T| \in B(H)$ is said to be quasinormal if $U|T|=|T|U$. An operator $T\in B(H)$ is said to be binormal if $[T^*T,TT^*]=0$ where $[A,B]:=AB-BA$. The set of binormal operators contains the set of quasinormal operators. An operator $T\in B(H)$ is said to be $p$-hyponormal $(0<p<\infty)$ if it satisfies $(T^*T)^p\geq (TT^*)^p$. If $p=1$, $T$ is called hyponormal. Aluthge transform is a good tool for studying various classes of operators, especially $p$-hyponormal operators. 
%For more details, one may refer to \cite{aluthge1990p,bong2000aluthge}.

\begin{corollary}\label{corollary2.2}
    Let $\phi(z)=az+(1-a)$ where $0<a<1$ and $\alpha>-1$. Then $\widetilde{C_\phi}^{(n)}$ defined on $\mathcal{A}_\alpha^2(\mathbb{D)}$ is not quasinormal, but it is binormal for any non-negative integer $n$. Moreover, $C_\phi$ defined on $\mathcal{A}_\alpha^2(\mathbb{D)}$ is not $p$-hyponormal for $p\geq1$.
\end{corollary}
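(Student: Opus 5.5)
We will use the explicit forms already obtained for $T=\widetilde{C_\phi}^{(n)}$ in the proof of Theorem \ref{theorem2.3}. There, with $s=s_n=\frac{(1-a)(1+a)^n}{2^na^{n+1}}>0$, we have $T^*T=a^{-(\alpha+2)}A_{s}$ by \eqref{eq6}, $|T|=a^{-(\alpha+2)/2}A_{s/2}$, and $T=U|T|$ with the fixed unitary $U$ of \eqref{U=TC}. Consequently $TT^*=U|T|^2U^*=a^{-(\alpha+2)}UA_sU^*$. All three claims then reduce to commutation questions between $U$ and members of the semigroup $\{A_t\}$.

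\emph{Binormality.} The operator $U$ lies in the same one-parameter family as the $A_t$. The family $A_t=T_{f_t}C_{\phi_t}$ of \eqref{eq3} extends to $t\in P$, and $A_t$ for $\mathrm{Re}(t)=0$ is unitary.

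\begin{itemize}
\item[(i)] The $A_t$ are weighted composition operators whose symbols $\phi_t$ all fix $1$ and are parabolic. They therefore commute with one another, by the semigroup property $A_tA_{t'}=A_{t+t'}$.
\item[(ii)] $U$ is also a weighted composition operator whose symbol $\frac{(1+a)z+(1-a)}{(1-a)z+(1+a)}$ fixes $\pm1$. It is hyperbolic, not parabolic, so $U$ is not in the family. Instead, I would use the relation $UA_tU^*=A_{\lambda t}$ with $\lambda=a^{-1}$ (or $a$). The conjugation $UA_tU^*$ is again a weighted composition operator, with symbol $c\circ\phi_t\circ c^{-1}$, which fixes $1$ and is parabolic. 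The Toeplitz factors match because the $(\alpha+2)$-power structure is preserved.
\item[(iii)] From (ii), $TT^*=a^{-(\alpha+2)}A_{\lambda s}$. Hence $T^*T$ and $TT^*$ both lie in the commutative semigroup, so $[T^*T,TT^*]=0$.
\end{itemize}

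\emph{Not quasinormal.} Quasinormality $U|T|=|T|U$ would give $UA_{s/2}U^*=A_{s/2}$. But by (ii) the left side equals $A_{\lambda s/2}$. The map $t\mapsto A_t$ is injective, which can be seen by comparing $A_t^*K_0=K_{\phi_t(0)}$ with $\phi_t(0)=t/(1+t)$, an injective function of $t$. Since $\lambda\neq1$ and $s>0$, the identity fails.

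\emph{Not $p$-hyponormal.} For $n=0$, $p$-hyponormality of $C_\phi$ means $(T^*T)^p\ge (TT^*)^p$, that is, $A_{ps}\ge A_{p\lambda s}$ after scaling. Here $\lambda=a^{-1}>1$ (I would confirm the direction in step (ii)). I would test this inequality on kernels. Using $A_t^*K_w=\overline{f_t(w)}K_{\phi_t(w)}$,
\[
\langle A_tK_w,K_w\rangle=f_t(w)\big(1-\overline{w}\phi_t(w)\big)^{-(\alpha+2)}.
\]
Take $w$ real and close to $-1$, or alternatively evaluate at $w=0$. At $w=0$ this gives $f_t(0)=(1+t)^{-(\alpha+2)}$, which is decreasing in $t$. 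That is consistent with $A_{ps}\ge A_{p\lambda s}$, so $w=0$ yields no contradiction.

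A cleaner route avoids positivity tests in favor of the standard fact that $p$-hyponormality for $p\ge1$ implies hyponormality, and hyponormality implies $\|C_\phi^*K_w\|\le\|C_\phi K_w\|$. Here $\|C_\phi^*K_w\|^2=(1-|\phi(w)|^2)^{-(\alpha+2)}$. Letting $w\to1$ radially, $\phi(w)\to1$, and the growth rates can be compared to $\|C_\phi K_w\|$. Alternatively, I would use the known fact (Cowen) that a hyponormal composition operator has a symbol fixing $0$, which $\phi$ does not, since $\phi(0)=1-a\neq0$. Concretely, $C_\phi^*K_0=K_{1-a}$ while $C_\phi K_0=K_0$, and $\|K_{1-a}\|>\|K_0\|=1$ contradicts hyponormality.

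\emph{Main obstacle.} I expect step (ii), the intertwining $UA_tU^*=A_{\lambda t}$ with the exact Toeplitz factors, to be the computational crux.
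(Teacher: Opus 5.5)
Your plan is sound. For the first two claims it takes a genuinely different route from the paper's.

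\textbf{How the paper argues.} The paper never computes $TT^*$. It gets non-quasinormality from the criterion ``$T$ quasinormal iff $\widetilde T=T$'' (Jung--Ko--Pearcy), checked by noting that $\langle\widetilde{C_\phi}^{(n)}K_0,K_0\rangle=f(0)$ changes with $n$. It gets binormality from Ito--Yamazaki--Yanagida's theorem, using only that the polar factor $U$ is the same for every $n$. For $p$-hyponormality it cites Zorboska ($\phi(0)\neq0$) together with L\"owner--Heinz. Your comparison $\|C_\phi^*K_0\|=\|K_{1-a}\|>1=\|C_\phi K_0\|$ is exactly what that citation encodes, so on this claim you agree with the paper.

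\textbf{Your intertwining step.} Your route needs $UA_tU^*=A_{\lambda t}$, which you only sketch. It is true, but with $\lambda=a$, not $a^{-1}$. Since $C_\rho C_\tau=C_{\tau\circ\rho}$, the conjugated symbol is $c^{-1}\circ\phi_t\circ c$, not $c\circ\phi_t\circ c^{-1}$. In the coordinate $w=(1+z)/(1-z)$ one has $\phi_t\colon w\mapsto w+2t$ and $c\colon w\mapsto w/a$, so $c^{-1}\circ\phi_t\circ c=\phi_{at}$.

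The weights match because $f_t=(\phi_t')^{(\alpha+2)/2}$ and $U=T_uC_c$ with $u=(c')^{(\alpha+2)/2}$. By the chain rule, $\psi\mapsto T_{(\psi')^{(\alpha+2)/2}}C_\psi$ reverses composition. Equivalently, check directly that $u\cdot(f_t\circ c)=f_{at}\cdot(u\circ\phi_{at})$. As a sanity check at $n=0$: $C_\phi^*C_\phi=a^{-(\alpha+2)}A_{(1-a)/a}$, while $C_\phi C_\phi^*=a^{-(\alpha+2)}A_{1-a}$.

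Binormality and non-quasinormality only use $\lambda>0$ and $\lambda\neq1$, so those arguments stand. For injectivity of $t\mapsto A_t$, use $\langle A_tK_0,K_0\rangle=(1+t)^{-(\alpha+2)}$; your formula for $A_t^*K_0$ drops the factor $\overline{f_t(0)}$.

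\textbf{A stronger conclusion.} With the correct $\lambda=a$, the kernel test you discarded actually succeeds and proves more than the statement. We have $(T^*T)^p=a^{-p(\alpha+2)}A_{ps}$ and $(TT^*)^p=U(T^*T)^pU^*=a^{-p(\alpha+2)}A_{aps}$. Since $(1+ps)^{-(\alpha+2)}<(1+aps)^{-(\alpha+2)}$, it follows that $\langle (T^*T)^pK_0,K_0\rangle<\langle (TT^*)^pK_0,K_0\rangle$. Hence no $\widetilde{C_\phi}^{(n)}$ (in particular $C_\phi$ itself) is $p$-hyponormal for any $p>0$. This answers the paper's closing question about $0<p<1$.

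\textbf{Comparison.} The paper's route costs nothing beyond Theorem 2.3, but it leans on two external theorems and stops at $p\ge1$. Yours costs one explicit conjugation computation, but it is self-contained and gives a sharper result.
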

\begin{proof}
    Let $n$ be a non-negative integer. Now by \eqref{K_w},
    $$\langle \widetilde{C_\phi}^{(n)} K_0,K_0\rangle=f(0)=\left(\frac{2^{n+1}a^n}{(1+a)((1-a)(1+a)^{n-1}+2^na^n)}\right)^{\alpha+2}$$ from Theorem \ref{theorem2.3}. Thus, $\langle \widetilde{C_\phi}^{(n)} K_0,K_0\rangle \neq \langle \widetilde{C_\phi}^{(n+1)} K_0,K_0\rangle$, which implies that $\widetilde{C_\phi}^{(n)}$ is not quasinormal by Proposition 1.10 of \cite{bong2000aluthge}. If the polar decomposition of $\widetilde{C_\phi}^{(n)}$ is $U_n|\widetilde{C_\phi}^{(n)}|$, then we have $U_n=U$ for all $n$ from \eqref{U=CT}. Thus from Theorem 3.1 of \cite{ito2004polar}, we have $\widetilde{C_\phi}^{(n)}$ is binormal. Since $\phi(0)\neq 0$, by Theorem 1 of \cite{zorboska1991hyponormal}, $C_\phi$ is not hyponormal on $\mathcal{A}_\alpha^2(\mathbb{D})$. It is well known from \cite{lowner1934monotone} that an operator that is a $p$-hyponormal operator for some $p>1$ is also hyponormal. Therefore, $C_\phi$ is not $p$-hyponormal for $p\geq1$.
\end{proof}

Now, the numerical range $W(T)$ and numerical radius $w(T)$ of an operator $T \in B(H)$ is defined as $ W(T)= \{\langle Tx , x \rangle  : x \in H ,\ || x || =1 \} $ and $w(T)=\{\sup |\lambda|:\lambda \in W(T)\}$. The spectrum of an operator $T$ is denoted by $\sigma(T)$. An operator $T\in B(H)$ is said to be normaloid if $||T||=r(T)$, where $r(T)$ is the spectral radius of $T$. An operator $T \in B(H)$ is said to be a spectraloid if $r(T)=w(T)$.
\begin{corollary}\label{corollary2.3}
     Suppose $\phi(z)=az+(1-a)$ with $0<a<1$, $\alpha>-1$ and $n\in \mathbb{N}\cup\{0\}$. Then, on $\mathcal{A}_\alpha^2(\mathbb{D)}$, the following properties hold:
    \begin{enumerate}[(i)]
    
        \item $0$ lies in the interior of $W(\widetilde{C_\phi}^{(n)})$.
        % \item $\sigma(\widetilde{C_\phi}^{(n)})=\sigma_{ess}(\widetilde{C_\phi}^{(n)})=\left\{\lambda \in \mathbb{C}:|\lambda|\leq \frac{1}{a^{\frac{\alpha+2}{2}}}\right\}$.
        \item $\|\widetilde{C_\phi}^{(n)}\|=\frac{1}{a^{\frac{\alpha+2}{2}}}$ and $\widetilde{C_\phi}^{(n)}$ is normaloid, hence spectraloid.
    \end{enumerate}
\end{corollary}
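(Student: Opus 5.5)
The plan is to reduce both parts to spectral information about $C_\phi$ itself, and to compute the norm from the factorization obtained in the proof of Theorem \ref{theorem2.3}. Write $T_n=\widetilde{C_\phi}^{(n)}$ and $R=a^{-(\alpha+2)/2}$. I will use two known facts.
\begin{enumerate}[(a)]
\item The Aluthge transform preserves the spectrum (Jung--Ko--Pearcy, \cite{bong2000aluthge}), so $\sigma(T_n)=\sigma(C_\phi)$ for all $n$.
\item For the parabolic non-automorphism $\phi(z)=az+(1-a)$, whose boundary fixed point is $1$ with $\phi'(1)=a$, the spectrum of $C_\phi$ on $\mathcal{A}_\alpha^2(\mathbb{D})$ is the closed disc $\{|\lambda|\le R\}$. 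This is Hurst's computation for linear fractional symbols on weighted Bergman spaces \cite{hurst1997relating}, or the corresponding result in \cite{cowen1995composition}.
\end{enumerate}
If I want to avoid quoting (b) in full for the spectral radius, I can get $r(C_\phi)\ge R$ directly. We have $C_\phi^m=C_{\phi_m}$ with $\phi_m(z)=a^mz+1-a^m$. Using $C_{\phi_m}^*K_w=K_{\phi_m(w)}$ and \eqref{norm K_w}, the quantity $\|C_{\phi_m}^*K_w\|/\|K_w\|$ tends to $a^{-m(\alpha+2)/2}$ as $w\to1^-$ along the reals. Hence $\|C_\phi^m\|^{1/m}\ge R$.

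\textbf{Part (ii): the norm.} In the proof of Theorem \ref{theorem2.3} (see \eqref{eq7}) we showed $T_n=U|T_n|$ with $U$ unitary and $|T_n|=a^{-(\alpha+2)/2}A_{s_n/2}$, where $s_n=\frac{(1-a)(1+a)^n}{2^na^{n+1}}>0$. Therefore $\|T_n\|=R\,\|A_{s_n/2}\|$, and it suffices to prove $\|A_r\|=1$ for every $r>0$.
\begin{itemize}
\item \emph{Lower bound.} Use $A_r^*K_w=\overline{f_r(w)}K_{\phi_r(w)}$. The map $\phi_r$ fixes $1$ with $\phi_r'(1)=1$, and $f_r(1)=1$. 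So for real $w\to1^-$ the ratio
\[
|f_r(w)|^2\left(\frac{1-w^2}{1-\phi_r(w)^2}\right)^{\alpha+2}
\]
tends to $1$, which gives $\|A_r\|\ge1$.
\item \emph{Upper bound.} This is the step I expect to be the main obstacle. I would first try to quote the contractivity of the Hermitian semigroup $A_t$ from \cite{cowen2013hermitian}: they show $A_t=e^{-tH}$ with $H\ge0$, or equivalently that $\sigma(A_t)\subset[0,1]$. As a fallback, note that $\|T_n\|\le\|C_\phi\|$, because Aluthge transforms do not increase the norm. Then it is enough to know $\|C_\phi\|\le R$. Via Lemma \ref{Lemma2.2} this is the same as $\|A_{t}\|\le1$ for $t=\frac1a-1$, and the general $r$ follows from the semigroup property and positivity, since $\|A_{pt}\|=\|A_t\|^p$.
\end{itemize}
Once $\|T_n\|=R$ is established, we get $R=\|T_n\|\ge r(T_n)=r(C_\phi)\ge R$. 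So $T_n$ is normaloid. Spectraloid then follows from $r\le w\le\|\cdot\|$.

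\textbf{Part (i).} By (a) and (b), $\sigma(T_n)$ is the closed disc of radius $R$ centred at $0$. Since $\sigma(T_n)\subseteq\overline{W(T_n)}$, the convex set $W(T_n)$ has closure containing this disc. A convex set whose closure contains an open disc contains that open disc itself. Hence $0$ lies in the interior of $W(T_n)$.
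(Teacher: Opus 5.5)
Your proof is correct. For (ii) it follows the paper's route. The paper also reads $\|\widetilde{C_\phi}^{(n)}\|^2=a^{-(\alpha+2)}\|A_s\|$ off \eqref{eq6} and quotes $\|A_s\|=1$ from Corollary 18 of \cite{cowen2013hermitian}. It then gets normaloidity from $\sigma(\widetilde{C_\phi}^{(n)})=\sigma(C_\phi)$ together with a cited value of $r(C_\phi)$. Your kernel-function limits for $\|A_r\|\ge 1$ and $r(C_\phi)\ge R$ simply replace part of those citations by direct computation.

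The real difference is in (i). The paper derives (i) from Corollary \ref{corollary2.2} (binormal and not normal) via Embry's result. As written, that argument only concludes $0\in W(\widetilde{C_\phi}^{(n)})$, not that $0$ is an interior point. Your route uses $\sigma(\widetilde{C_\phi}^{(n)})=\sigma(C_\phi)$, the inclusion $\sigma\subseteq\overline{W}$, and convexity. It proves the statement as actually claimed, and in fact gives $\{|\lambda|<R\}\subseteq W(\widetilde{C_\phi}^{(n)})\subseteq\{|\lambda|\le R\}$.

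The cost is that you need the whole spectrum of $C_\phi$, not just its spectral radius, but this is cheap. We have $C_\phi(1-z)^c=a^c(1-z)^c$, and $(1-z)^c\in\mathcal{A}_\alpha^2(\mathbb{D})$ exactly when $\mathrm{Re}\,c>-(\alpha+2)/2$. So every $\lambda$ with $0<|\lambda|<R$ is an eigenvalue. This gives both what (i) needs and $r(C_\phi)\ge R$ without citing Hurst.

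Two minor points.

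First, $\phi$ is not parabolic. At its Denjoy--Wolff point $\phi'(1)=a<1$, and its other fixed point is $\infty$, so it is of hyperbolic type. That is exactly why the spectrum is a disc; for parabolic non-automorphisms it is a spiral or segment with empty interior. If you look up the parabolic case, you will get the wrong answer.

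Second, your fallback for $\|A_r\|\le 1$ is not independent of the citation. By Lemma \ref{Lemma2.2}, the inequality $\|C_\phi\|\le R$ is literally $\|A_{1/a-1}\|\le 1$. A self-contained bound is available:
\begin{itemize}
\item For $\tau=e^s-1$ we have $A_\tau=e^{-s(\alpha+2)}C_{\sigma_s}^*C_{\sigma_s}$.
\item The standard estimate $\|C_\psi\|^2\le\bigl(\frac{1+|\psi(0)|}{1-|\psi(0)|}\bigr)^{\alpha+2}$ then gives $\|A_\tau\|\le(2-e^{-s})^{\alpha+2}<2^{\alpha+2}$ for all $\tau>0$.
\item Hence $r(A_t)=\lim_m\|A_{mt}\|^{1/m}\le 1$.
\item Since $A_t\ge 0$, it follows that $\|A_t\|=r(A_t)\le 1$.
\end{itemize}
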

\begin{proof}
\begin{enumerate}[(i)]
    \item From Corollary \ref{corollary2.2},  $\widetilde{C_{\phi}}^{(n)}$ is non-normal and binormal for all non negative integers $n$. Thus $0\in W(\widetilde{C_{\phi}}^{(n)})$ by Corollary 1 of \cite{embry1970similarities}.
    % \item According to Theorem 8 of \cite{hurst1997relating}, we have 
    % \begin{equation*}
    %     \sigma({C_{\phi}})=\sigma_{ess}(C_\phi)=\{\lambda \in \mathbb{C}:|\lambda|\leq \frac{1}{a^{\frac{\alpha+2}{2}}}\}.
    % \end{equation*}
    % From Theorem 1.3 of \cite{bong2000aluthge}, $\sigma(C_\phi)=\sigma(\widetilde{C_{\phi}}^{(n)})$ and from Theorem 2.3 of \cite{muneo2002spectral}, $\sigma_{ess}(C_\phi)=\sigma_{ess}(\widetilde{C_{\phi}}^{(n)})$. Hence the result follows.
   \item Let $n$ be any non-negative integer. Then 
   \begin{equation*} \left\|\widetilde{C_{\phi}}^{(n)}\right\|^2=\left\|(\widetilde{C_{\phi}}^{(n)})^*\widetilde{C_{\phi}}^{(n)}\right\|=\left\| \frac{1}{a^{\alpha+2}}T_{(\frac{1}{-s  z+1+s })^{\alpha+2}}C_{\frac{(1-s )z+s }{-s  z+1+s }}\right\|\hspace{0.5cm}
   \end{equation*}
   from \eqref{eq6}, where $s=\frac{(1-a)(1+a)^n}{2^n a^{n+1}}$.\\
   Again,
$\left\|T_{(\frac{1}{-s  z+1+s })^{\alpha+2}}C_{\frac{(1-s )z+s }{-s  z+1+s }}\right\|=||A_s||=1$
   by Corollary 18 of \cite{cowen2013hermitian}. Therefore, $\|\widetilde{C_{\phi}}^{(n)}\|=\frac{1}{a^{\frac{\alpha+2}{2}}}$. From Lemma 2.4 of \cite{richman2003subnormality}, $r(C_\phi)=\frac{1}{a^{\frac{\alpha+2}{2}}}$. Since $\sigma(C_\phi)=\sigma(\widetilde{C_{\phi}}^{(n)})$ from Theorem 1.3 of \cite{bong2000aluthge}, $r(\widetilde{C_{\phi}}^{(n)})=\frac{1}{a^{\frac{\alpha+2}{2}}}$. Hence, $\widetilde{C_{\phi}}^{(n)}$ is normaloid. Since, every normaloid operator is spectraloid, $\widetilde{C_{\phi}}^{(n)}$ is spectraloid with $w(\widetilde{C_{\phi}}^{(n)})=r(\widetilde{C_{\phi}}^{(n)})=\frac{1}{a^{\frac{\alpha+2}{2}}}$.
\end{enumerate}
    
\end{proof}
\begin{example}
    For $\alpha>-1$, consider the composition operator $C_\phi$ on $\mathcal{A}_\alpha^2(\mathbb{D)}$ where $\phi(z)=\frac{z+1}{2}$. From Theorem \ref{theorem2.3}, for $n=1$ and $a=1/2$, we conclude that, the Aluthge transform of $C_\phi$ is $\widetilde{C}_\phi=T_\delta C_\gamma$ where $\delta(z)=\left(\frac{8}{-z+9}\right)^{\alpha+2}$ and $\gamma(z)=\frac{3z+5}{-z+9}$. Also from Corollary \ref{corollary2.3}, we have %$\sigma(\widetilde{C}_\phi)=\sigma_{ess}(\widetilde{C}_\phi)=\{\lambda \in \mathbb{C}:|\lambda|\leq 2^{\frac{\alpha+2}{2}}\}$ and
$\|\widetilde{C}_\phi\|=r(\widetilde{C}_\phi)=w(\widetilde{C}_\phi)=2^{\frac{\alpha+2}{2}}$.
\end{example}

\section{Convergence of iterated Aluthge transforms of some composition operator}\label{section3}
In \cite{bong2000aluthge}, Jung et al. conjectured that the sequence of iterated Aluthge transforms of a bounded linear operator converges in the norm topology to a quasinormal operator.
%In \cite{antezana2011iterated}, Antezana et al. have proved the conjecture to be true in the finite dimensional case. In \cite{bong2003iterated}, Jung et al. revised the conjecture and proposed that, for operators on infinite dimensional Hilbert spaces, the sequence of their Aluthge transforms converges in the strong operator topology. Cho et al. in \cite{cho2005aluthge}, have given an example of an unilateral weighted shift for which the sequence of Aluthge iterates does not converge in the weak operator topology. Moreover, they refined the conjecture by stating that for $p-$hyponormal operators with $0<p\leq \infty$, the sequence of iterated Aluthge transforms converges in the strong operator topology.
In \cite{jung2015iterated}, Jung et al. have given partial solutions to this problem on the classical Hardy space. In the following section, we proceed to the same on the weighted Bergman spaces.
\noindent Here, we discuss the convergence of iterated Aluthge transforms of $C_\phi$ on $\mathcal{A}_\alpha^2(\mathbb{D)}$ where $\phi(z)=az+(1-a)$ with $0<a<1$.
\begin{theorem}\label{theorem3.1}
    Let $\phi(z)=az+(1-a)$ with $0<a<1$ and let $\alpha>-1$. Then the sequence $\{\widetilde{C_{\phi}}^{(n)}\}$ defined on $\mathcal{A}_\alpha^2(\mathbb{D)}$ converges to $0$ with respect to the strong operator topology, but fails to converge in the norm topology.
\end{theorem}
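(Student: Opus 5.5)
The plan is to use the explicit form $\widetilde{C_\phi}^{(n)}=T_{f_n}C_{\psi_n}$ from Theorem \ref{theorem2.3} together with the uniform norm bound of Corollary \ref{corollary2.3}. Strong convergence to $0$ will follow from a density argument, and the failure of norm convergence from the fact that the norms are constant and nonzero.

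\textbf{Step 1: the limits of the symbols.} Write $\varepsilon_n=\left(\frac{2a}{1+a}\right)^n$. Since $2a<1+a$ for $0<a<1$, we have $\varepsilon_n\to 0$. Dividing the numerators and denominators in Theorem \ref{theorem2.3} by $(1+a)^n$ gives
$$f_n(z)=\left(\frac{\lambda_n}{D_n-B_nz}\right)^{\alpha+2},$$
where
$$\lambda_n=2\varepsilon_n,\qquad D_n=(1-a)+(1+a)\varepsilon_n,\qquad B_n=(1-a)(1-\varepsilon_n).$$
In particular $0<B_n<D_n$. Similarly, $\psi_n(z)\to \frac{-(1-a)z+(1-a)}{(1-a)(1-z)}=1$ pointwise, a boundary point, while $f_n\to 0$ pointwise. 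This pointwise behaviour suggests that the strong limit is $0$, which is a normal operator.

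\textbf{Step 2: $\|f_n\|_\alpha\to 0$.} Put $w_n=B_n/D_n\in(0,1)$. Then $f_n=(\lambda_n/D_n)^{\alpha+2}K_{w_n}$ with $K_{w_n}$ as in \eqref{K_w}. By \eqref{norm K_w},
$$\|f_n\|_\alpha^2=\left(\frac{\lambda_n^2}{D_n^2}\cdot\frac{D_n^2}{(D_n-B_n)(D_n+B_n)}\right)^{\alpha+2}.$$
Since $D_n-B_n=2\varepsilon_n$, this simplifies to
$$\|f_n\|_\alpha^2=\left(\frac{2\varepsilon_n}{D_n+B_n}\right)^{\alpha+2}.$$
Because $D_n+B_n\to 2(1-a)>0$ and $\varepsilon_n\to0$, it follows that $\|f_n\|_\alpha\to 0$.

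\textbf{Step 3: strong convergence.} Let $h$ be a polynomial, or more generally any $h\in H^\infty$. Since $\psi_n$ maps $\mathbb{D}$ into itself, $|h\circ\psi_n|\le\|h\|_\infty$ on $\mathbb{D}$. Hence
$$\|\widetilde{C_\phi}^{(n)}h\|_\alpha=\|f_n\cdot(h\circ\psi_n)\|_\alpha\le \|h\|_\infty\|f_n\|_\alpha\to0.$$
By Corollary \ref{corollary2.3}(ii), $\sup_n\|\widetilde{C_\phi}^{(n)}\|=a^{-(\alpha+2)/2}<\infty$. Polynomials are dense in $\mathcal{A}_\alpha^2(\mathbb{D})$, so a standard $\varepsilon/3$ argument extends the convergence to every $h$. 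Thus $\widetilde{C_\phi}^{(n)}\to0$ in the strong operator topology.

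\textbf{Step 4: no norm convergence.} A norm limit, if it existed, would also be the strong limit, and so would equal $0$. But $\|\widetilde{C_\phi}^{(n)}\|=a^{-(\alpha+2)/2}\neq0$ for every $n$ by Corollary \ref{corollary2.3}, so the norms cannot tend to $0$. This rules out norm convergence.

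The main obstacle is Step 2. Pointwise convergence $f_n\to0$ alone is not enough, because the kernel $K_{w_n}$ blows up as $w_n\to1$. The argument therefore depends on an exact cancellation in the kernel norm, which leaves a factor of order $\varepsilon_n$.
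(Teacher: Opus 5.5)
Your proof is correct, and it takes a different route from the paper's for the strong convergence. The paper uses the adjoint representation $\widetilde{C_\phi}^{(n)}=C_\Psi^*T_F^*$ of Corollary \ref{Corollary2.2}, which rests on Hurst's adjoint formula through Lemma \ref{lemma2.1}. It computes $\widetilde{C_\phi}^{(n)}K_\omega=\overline{F(\omega)}K_{\Psi(\omega)}$ for every reproducing kernel and shows $\|\widetilde{C_\phi}^{(n)}K_\omega\|\to0$ with the help of $|\omega|\ge \mathrm{Re}\,\omega$. It then concludes from the density of $\mathrm{span}\{K_\omega\}$ and the uniform norm bound.

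You instead test on $H^\infty$ using the forward form $T_{f_n}C_{\psi_n}$. The key facts are that $T_{f_n}$ is multiplication by $f_n$ and that composing with the self-map $\psi_n$ does not increase the sup norm. The only computation this leaves is $\|f_n\|_\alpha=\|\widetilde{C_\phi}^{(n)}1\|_\alpha$. Since $K_0=1$, this is exactly the paper's kernel estimate at $\omega=0$. Your exact evaluation via \eqref{norm K_w} checks out: $\|f_n\|_\alpha^2=\bigl(\varepsilon_n/((1-a)+a\varepsilon_n)\bigr)^{\alpha+2}$.

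Your route has two advantages. It avoids Corollary \ref{Corollary2.2} and the kernel-by-kernel estimate altogether. It also gives a rate that is uniform on bounded subsets of $H^\infty$, namely $\|\widetilde{C_\phi}^{(n)}h\|_\alpha\le\|h\|_\infty\bigl(\varepsilon_n/(1-a)\bigr)^{(\alpha+2)/2}$. By contrast, the paper's expression for $\|\widetilde{C_\phi}^{(n)}K_\omega\|^2$ has $(1-a)|1-\omega|^2$ in its denominator, so it is not uniform as $\omega\to1$.

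The paper's kernel method has an advantage of its own: it identifies the strong limit even when that limit is nonzero. This is exactly what the paper needs later to show that $a^{\frac{\alpha+2}{2}}\widetilde{C_\phi^*}^{(n)}$ converges to a unitary. Your sup-norm trick works only because the limit here is $0$.

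Your Step 4 is the paper's argument, stated a little more explicitly: a norm limit would have to coincide with the strong limit $0$. One harmless slip is that $B_0=0$, so $0<B_n$ holds only for $n\ge1$.
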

\begin{proof}
    From Corollary \ref{corollary2.3}(ii), we conclude that $\{\widetilde{C_{\phi}}^{(n)}\}$ fails to converge to $0$ in the norm topology. To analyze strong convergence, let us take $\omega \in \mathbb{D}$. By Corollary \ref{Corollary2.2}, we get
   $$\widetilde{C_{\phi}}^{(n)}K_\omega=C_{\Psi} ^* T_{F} ^* K_\omega=\overline{F(\omega)}K_{\Psi(\omega)},$$
   where
   \begin{eqnarray*}
       F(z) &=& \left(\frac{2^{n+1}a^n}{-(1-a)((1+a)^n +2^n a^n)z+ (1+a)((1-a)(1+a)^{n-1}+2^na^n)}\right)^{\alpha+2},\\
       \Psi(z) &=& \frac{-(1+a)((1-a)(1+a)^{n-1}-2^na^n)z+(1-a)((1+a)^n-2^na^n)}{-(1-a)((1+a)^n +2^n a^n)z+ (1+a)((1-a)(1+a)^{n-1}+2^na^n)}
   \end{eqnarray*}
   for all non-negative integers $n$ and $K_{\omega}$ is given by \eqref{K_w}. Therefore,   
   \begin{align}\label{eq9}   ||\widetilde{C_{\phi}}^{(n)}K_\omega||^2=|\overline{F(\omega)}|^2\|K_{\Psi(\omega)}\|^2=|F(\omega)|^2\|K_{\Psi(\omega)}\|^2
     \end{align}for all non-negative integers $n$. Now, 
     \begin{eqnarray*}         |F(\omega)|^2&=&|F(\overline{\omega})|^2
     % &=& \left|\frac{2^{n+1}a^n}{-(1-a)((1+a)^n +2^n a^n)\overline{\omega}+ (1+a)((1-a)(1+a)^{n-1}+2^na^n)}\right|^{2(\alpha+2)}\\
         = \left|\frac{2\left(\frac{2a}{1+a}\right)^n}{-(1-a)\left(1 +\left(\frac{2a}{1+a}\right)^n\right)\overline{\omega}+(1-a)+ (1+a)\left(\frac{2a}{1+a}\right)^n }\right|^{2(\alpha+2)}
     \end{eqnarray*}
     and  from \eqref{norm K_w},
     \begin{eqnarray*}
         \|K_{\Psi(\omega)}\|^2&=&\frac{1}{\left(1-\left|{\frac{-(1+a)((1-a)(1+a)^{n-1}-2^na^n)w+(1-a)((1+a)^n-2^na^n)}{-(1-a)((1+a)^n +2^n a^n)w+ (1+a)((1-a)(1+a)^{n-1}+2^na^n)}} \right|^2\right)^{\alpha+2}}\\
         &=&\frac{1}{\left(1-\left|\frac{\left(-(1-a)+(1+a)\left(\frac{2a}{1+a}\right)^n\right)\omega+(1-a)(1-\left(\frac{2a}{1+a}\right)^n}{-(1-a)\left(1+\frac{2a}{1+a}\right)^n\omega+(1-a)+(1+a)\left(\frac{2a}{1+a}\right)^n} \right|^2\right)^{\alpha+2}}
     \end{eqnarray*}for any non-negative integer $n$. Taking $t=\frac{2a}{1+a}$ and since $|\omega|\geq \text{Re}(\omega)$, we have from \eqref{eq9},
     \begin{eqnarray*}
         ||\widetilde{C_{\phi}}^{(n)}K_\omega||^2
         &=& \left(\frac{4t^{2n}}{4t^n(1-a-at^n)|\omega|^2+4t^n(at^n+1-a)-8t^n(1-a)\text{Re}(\omega)}\right)^{\alpha+2}\\
         &\leq&\left(\frac{t^n}{(1-a-at^n)|\omega|^2+(at^n+1-a)-2(1-a)|\omega|}\right)^{\alpha+2}\ \\
         %&=& \left(\frac{t^n}{t^n(-a|\omega|^2+a)+(1-a)|\omega|^2+(1-a)-2(1-a)|\omega|}\right)^{\alpha+2}\\
         %&=&\left(\frac{t^n}{t^n(-a|\omega|^2+a)+(1-a)(|\omega|-1)^2}\right)^{\alpha+2}\\
         &=&\left(\frac{1}{-a|\omega|^2+a+\frac{1}{t^n}(1-a)(|\omega|-1)^2}\right)^{\alpha+2}.
     \end{eqnarray*}
     As $0<t<1$, so $\displaystyle \lim_{n\to \infty}\frac{1}{t^n}=\infty$ and hence $\displaystyle \lim_{n\to \infty}||\widetilde{C_{\phi}}^{(n)}K_\omega||=0$ for all $\omega \in \mathbb{D}$. Since span$\{K_\omega\}$ is dense in $\mathcal{A}_\alpha^2(\mathbb{D)}$ and $\{||\widetilde{C_{\phi}}^{(n)}||\}$ is bounded by Corollary \ref{corollary2.3}(ii), therefore $\{\widetilde{C_{\phi}}^{(n)}\}$ converges to $0$ in the strong operator topology.
\end{proof}

\section{Additional Observation}\label{section4}
In this section, we provide the iterated Aluthge transform of $C_\phi^*$ on $\mathcal{A}_\alpha^2(\mathbb{D)}$ where $\phi(z)=az+(1-a)$ for $0<a<1$. With the help of this, we present the iterated Aluthge transform of $C_\sigma$ where $\sigma(z)=\frac{az}{-(1-a)z+1}$ for $0<a<1$ on some weighted Hardy space. To proceed, we require the following preliminaries on weighted Hardy spaces as found in \cite{maccluer1996composition}.
%Let $\mathcal{H}$ be a Hilbert space of functions analytic on $\mathbb{D}$. If the monomials $1,z,z^2,\ldots$ are an orthogonal set of non-zero vectors with dense span in $\mathcal{H}$, then $\mathcal{H}$ is called a weighted Hardy space. The weight sequence for a weighted Hardy space $\mathcal{H}$ is defined to be $\beta(n)=\|z^n\|$. The weighted Hardy space with weight sequence $\beta(n)$ is denoted by $H^2(\beta)$. The orthogonality implies that the norm on $H^2(\beta)$ is given by $$\left\|\sum_{n=0}^\infty a_nz^n\right\|^2=\sum_{n=0}^\infty |a_n|^2\beta(n)^2$$ and inner product by
%$$\left\langle\sum_{n=0}^\infty a_nz^n\ , \ \sum_{n=0}^\infty b_n z^n\right\rangle=\sum_{n=0}^\infty a_n\overline{b_n}\beta(n)^2.$$
Let $\beta(n)$ be a sequence of positive numbers satisfying 
\begin{eqnarray*}
    \beta(0)=1\ \mbox{and}\ \displaystyle \lim_{n\to \infty} \beta(n)^{1/n}= 1.
\end{eqnarray*}
The weighted Hardy space $H^2(\beta)$ is the space of analytic functions $f$ on $\mathbb{D}$ given by $\displaystyle f(z)=\sum_{n=0}^\infty a_nz^n$, such that 
\begin{align*}
    \|f\|^2=\sum_{n=0}^\infty |a_n|^2\beta(n)^2<\infty.
\end{align*}
Each weighted Hardy space is a Hilbert space with inner product 
$$\left\langle\sum_{n=0}^\infty a_nz^n\ , \ \sum_{n=0}^\infty b_n z^n\right\rangle=\sum_{n=0}^\infty a_n\overline{b_n}\beta(n)^2$$
for which the monomials $\{z^n\}_{n=0}^\infty$ form a complete set of non-zero orthogonal vectors. If $\beta(n)=1$ for all $n$, then $H^2(\beta)$ reduces to the classical Hardy space $H^2$.\\
As defined by Cowen and MacCluer in \cite{cowen1995composition}, the generating function for a weighted Hardy space $H^2(\beta)$ is the function $$k(z)=\sum_{j=0}^\infty\frac{z^j}{\beta(j)^2}.$$
The function $k(z)$ is analytic on the unit disk $\mathbb{D}$ and, for each point $\omega$ in $\mathbb{D}$, the function $K_\omega(z)=k(\overline{\omega}z)$ belongs to $H^2(\beta)$. The $K_\omega$ are the kernel functions for $H^2(\beta)$ satisfying the property $f(\omega)=\langle f,K_\omega \rangle$ and this implies $\|K_\omega\|^2=k(|\omega|^2)$.
\begin{lemma}\label{lemma4.1}
     Suppose $\phi(z)=az+(1-a)$ with $0<a<1$ and $\alpha>-1$. For each $n\in\mathbb{N}\cup\{0\}$, the $n$th iterated Aluthge transform of $C_\phi^*$ on $\mathcal{A}_\alpha^2(\mathbb{D)}$ is $$\widetilde{C_{\phi}^*}^{(n)}=T_{g} C_{\theta}$$ 
     where $g$ and $\theta$ are given by
     \begin{eqnarray*}
         g(z) &=& \left(\frac{2^{n+1}}{-(1-a)((1+a)^n +2^n)z +(1+a)((1-a)(1+a)^{n-1}+2^n)}\right)^{\alpha+2},\\
         \theta(z) &=& \frac{-(1+a)((1-a)(1+a)^{n-1}-2^n)z+(1-a)((1+a)^n-2^n)}{-(1-a)((1+a)^n +2^n)z +(1+a)((1-a)(1+a)^{n-1}+2^n)}.
     \end{eqnarray*}
\end{lemma}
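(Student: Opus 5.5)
The plan is induction on $n$, modelled on the proof of Theorem \ref{theorem2.3}. The main tools are Lemma \ref{lemma2.1} (Hurst's adjoint formula), the semigroup $A_t$ of \eqref{eq3}, and the unitary $U$ of Theorem \ref{theorem2.2}.

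\emph{Base case $n=0$.} Put $n=0$ in the stated formulas. Then $(1+a)((1-a)(1+a)^{-1}+1)=2$, and the formulas reduce to
$$g(z)=\frac{1}{(1-(1-a)z)^{\alpha+2}},\qquad \theta(z)=\frac{az}{-(1-a)z+1}.$$
Theorem \ref{Theorem2.1}, applied to $\phi(z)=az+(1-a)$ (so $c=0$, $d=1$, $h\equiv 1$), gives $C_\phi^*=T_gC_\theta$. This settles the base case.

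\emph{Polar decomposition of $C_\phi^*$.} From $C_\phi=U|C_\phi|$ with $U$ unitary we get $C_\phi^*=U^*\bigl(U|C_\phi|U^*\bigr)$. This is the polar decomposition of $C_\phi^*$, with unitary part $U^*$ and $|C_\phi^*|=U|C_\phi|U^*$. I will use \eqref{U=TC}, so that $U^*=C^*_{c}T^*_{(2\sqrt a/((1-a)z+1+a))^{\alpha+2}}$. By Lemma \ref{lemma2.1} this is again a single operator of the form $T_\delta C_\sigma$, because the Toeplitz symbol is exactly $r$ for the linear fractional map $c(z)=\frac{(1+a)z+(1-a)}{(1-a)z+(1+a)}$.

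\emph{Inductive step.} Assume $S:=\widetilde{C_\phi^*}^{(k)}=T_gC_\theta$ with $g,\theta$ as stated for $n=k$. I would carry out the following steps in order.
\begin{enumerate}[(1)]
\item Compute $S^*S=C_\theta^*T_g^*T_gC_\theta$. Lemma \ref{lemma2.1} turns $C_\theta^*T_g^*$ into $T_\delta C_\sigma$, and then $S^*S=T_{\delta\cdot(g\circ\sigma)}C_{\theta\circ\sigma}$. I expect this to collapse to $\lambda A_{u}$ for an explicit constant $\lambda$ and an explicit $u>0$. The analogue of $s$ in Theorem \ref{theorem2.3} should be $u=\frac{(1-a)(1+a)^k}{2^k a}$, since $g,\theta$ are the formulas of Theorem \ref{theorem2.3} with $a^k$ replaced by $1$.
\item The semigroup property then gives $|S|=\sqrt\lambda\,A_{u/2}$ and $|S|^{1/2}=\lambda^{1/4}A_{u/4}$.
\item Check directly, exactly as in Theorem \ref{theorem2.3}, that $S=U^*|S|$. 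Injectivity of $|S|$, which is needed for the kernel condition, follows as before from the fact that $|S|$ is a nonzero weighted composition operator.
\item Compute $\widetilde{S}=|S|^{1/2}U^*|S|^{1/2}=\sqrt\lambda\,A_{u/4}U^*A_{u/4}$ using $C_\rho C_\tau=C_{\tau\circ\rho}$ and $C_\rho T_\eta=T_{\eta\circ\rho}C_\rho$. Reduce the result to $T_{g'}C_{\theta'}$ and verify that $g',\theta'$ are the stated formulas with $k+1$ in place of $k$.
\end{enumerate}

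The main obstacle is the algebra in steps (1) and (4). One has to confirm that $S^*S$ really is a scalar multiple of some $A_u$; this is what makes $|S|^{1/2}$ computable. One also has to track how the constants in $g$ combine into $\lambda$.

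A sanity check that may shorten the work uses the identity $\widetilde{T^*}=U\,\widetilde{T}^{\,*}\,U^*$, valid whenever $T=U|T|$ with $U$ unitary. Combining this with the fact that Aluthge transforms commute with unitary conjugation, and that all $\widetilde{C_\phi}^{(n)}$ share the unitary $U$ (proof of Corollary \ref{corollary2.2}), gives
$$\widetilde{C_\phi^*}^{(n)}=U^n\bigl(\widetilde{C_\phi}^{(n)}\bigr)^*U^{*n}.$$
Together with Corollary \ref{Corollary2.2}, this yields an independent route. The direct induction is cleaner to write, so I would present that and use the conjugation identity only to check the final formulas.
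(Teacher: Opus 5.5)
Your route is the paper's. The paper also proves the lemma by rerunning the induction of Theorem \ref{theorem2.3}. It uses the unitary factor $U'$ of \eqref{U'}, which is exactly your $U^*$ after applying Lemma \ref{lemma2.1} to \eqref{U=TC}, and it shows that $(\widetilde{C_\phi^*}^{(k)})^*\widetilde{C_\phi^*}^{(k)}$ is a multiple of some $A_u$.

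\textbf{The value you predict in step (1) is wrong.} The correct values are $\lambda=a^{-(\alpha+2)}$ and $u=\frac{(1-a)(1+a)^k}{2^k}$, not $\frac{(1-a)(1+a)^k}{2^k a}$.
\begin{itemize}
\item The case $k=0$ already shows this. There $S^*S=C_\phi C_\phi^*=T_{\delta\circ\phi}C_{\sigma\circ\phi}=a^{-(\alpha+2)}A_{1-a}$, with $C_\phi^*=T_\delta C_\sigma$ from your base case. By contrast, $\frac{1-a}{a}$ is the parameter of $C_\phi^*C_\phi$ in Lemma \ref{Lemma2.2}.
\item Your heuristic misfires because $g,\theta$ are not $f,\psi$ of Theorem \ref{theorem2.3} with $a^k$ replaced by $1$. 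They are the functions $F,\Psi$ of Corollary \ref{Corollary2.2} with that replacement. So $S=R^*$ for some $R$ of the form $T_fC_\psi$, and $S^*S=RR^*$ rather than $R^*R$.
\item Write the symbols in the normalized form used in the proof of Theorem \ref{theorem2.3}, with $as$ replaced by $A=(1-a)\bigl(\frac{1+a}{2}\bigr)^k$. Then $R^*R$ has composition symbol $\phi_{A/a}$, which is your guess, while $RR^*$ has $\phi_A$.
\item With your value, step (3) would fail. For $k=0$ it would assert $U^*|C_\phi|=|C_\phi|U^*$, i.e.\ that $C_\phi$ is quasinormal, contradicting Corollary \ref{corollary2.2}.
\item With the correct value, steps (2)--(4) go through exactly as you describe.
\end{itemize}

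\textbf{Your conjugation identity is correct and would have caught this.} The identity $\widetilde{C_\phi^*}^{(n)}=U^n(\widetilde{C_\phi}^{(n)})^*U^{*n}$ gives $S^*S=U^{k+1}|\widetilde{C_\phi}^{(k)}|^2U^{*(k+1)}$. A short computation with the symbols shows $UA_sU^*=A_{as}$ for every $s>0$. Taking $s=\frac{(1-a)(1+a)^k}{2^ka^{k+1}}$ from \eqref{eq6}, this yields $u=a^{k+1}s=\frac{(1-a)(1+a)^k}{2^k}$. That is exactly the factor of $a$ your heuristic lost.

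Pushed one step further, the identity gives $\widetilde{C_\phi^*}^{(n)}=a^{-(\alpha+2)/2}\,U^*A_{u_n/2}$ directly, with $u_n=\frac{(1-a)(1+a)^n}{2^n}$. So the lemma reduces to Theorem \ref{theorem2.3} plus a single product computation, with no second induction.
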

\begin{proof}
    We prove this by using mathematical induction similar to Theorem \ref{theorem2.3} considering the polar decomposition of $\widetilde{C_{\phi }^*}^{(n)}$ as $U'|\widetilde{C_{\phi }^*}^{(n)}|$ where 
    \begin{equation}\label{normAt}
         (\widetilde{C_\phi^*}^{(n)})^*(\widetilde{C_\phi^*}^{(n)})=\frac{1}{a^{\alpha+2}}T_{(\frac{1}{-t  z+1+t })^{\alpha+2}}C_{\frac{(1-t )z+t }{-t  z+1+t }}=\frac{1}{a^{\alpha+2}}A_{t}
     \end{equation} 
     for $t=\frac{(1-a)(1+a)^n}{2^n}$ and the unitary operator $U'$ is given by 
     \begin{equation}\label {U'}
         U'=T_{\left({\frac{2\sqrt{a}}{-(1-a)z+(1+a)}}\right)^{\alpha+2}}C_{\frac{(1+a)z-(1-a)}{-(1-a)z+(1+a)}}
     \end{equation}
\end{proof}
%Applying Theorem D$^\prime$ of \cite{maccluer1996composition} for $\sigma(z)=\frac{az}{-(1-a)z+1}$ where $0<a<1$, we get the following remark.
 %The composition operator $C_\sigma$ is defined similarly on weighted Hardy space. 
%From Lemma \ref{lemma4.1} for $\alpha=0$, we have the iterated Aluthge transform of $C_\phi^*$ on $\mathcal{A}^2(\mathbb{D)}$.
\noindent We now give the iterated Aluthge transform of $C_\sigma$ on some weighted Hardy space where $\sigma(z)=\frac{az}{-(1-a)z+1}$ is the dual map of $\phi(z)=az+(1-a)$ for $0<a<1$. 
\begin{theorem}\label{theorem4.1}
    Let $\phi(z)=az+(1-a)$ and $\sigma(z)=\frac{az}{-(1-a)z+1}$ where $0<a<1$ be maps from  $\mathbb{D}$ into itself. For $\alpha>-1$ and $j\in\mathbb{N}\cup\{0\}$, let 
    \begin{eqnarray}
        \gamma_\alpha(j) &=&\sqrt{\frac{j!\Gamma(\alpha+2)}{\Gamma(j+\alpha+2)}}\label{gamma}\\
        \beta_\alpha(j) &=&\begin{cases}
            1,& j=0\\
            \frac{1}{\gamma_\alpha(j-1)}=\sqrt{\frac{\Gamma(j+\alpha+1)}{{(j-1)!\Gamma(\alpha+2)}}},&j\geq 1.
        \end{cases}\label{beta}
    \end{eqnarray}
    For each non-negative integer $n$, the $n$th iterated Aluthge transform of $C_\sigma$ on $H^2(\beta_\alpha)$ is given by 
    \begin{align}\label{C sigma n}
     \widetilde{C_{\sigma}}^{(n)}=1 \oplus V\left(a\widetilde{C_{\phi}^*}^{(n)}\right)V^*
    \end{align}
    on $H^2(\beta_\alpha)=(zH^2(\beta_\alpha))^\perp \oplus(zH^2(\beta_\alpha))$ and the unitary operator $V:H^2(\gamma_\alpha)\to zH^2(\beta_\alpha)$ is given by 
      \begin{equation}\label{V}
        Vz^j = \frac{\gamma_\alpha(j)}{\beta_\alpha(j+1)}z^{j+1}.
     \end{equation}
\end{theorem}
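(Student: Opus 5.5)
The proof has two parts. First we establish the case $n=0$: $C_\sigma = 1\oplus V(aC_\phi^*)V^*$. Then we push this decomposition through the Aluthge iteration.

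\textbf{Step 1: the case $n=0$.} Since $\sigma(0)=0$, the constants span a subspace invariant under $C_\sigma$, and $C_\sigma 1=1$. Composition with $\sigma$ maps $z^j$ to $\sigma^j$, which is divisible by $z$. Hence $zH^2(\beta_\alpha)$ is also invariant.

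Because $H^2(\beta_\alpha)=\mathbb{C}\oplus zH^2(\beta_\alpha)$ is an orthogonal splitting and both summands are invariant, $C_\sigma$ reduces to $1\oplus C_\sigma|_{zH^2(\beta_\alpha)}$. Note that $H^2(\gamma_\alpha)$ is just $\mathcal{A}^2_\alpha(\mathbb{D})$ with its monomial norms, and $V$ maps the orthonormal basis $z^j/\gamma_\alpha(j)$ onto the orthonormal basis $z^{j+1}/\beta_\alpha(j+1)$. So $V$ is unitary. It remains to show
\[
V^*C_\sigma V = aC_\phi^* \quad\text{on } \mathcal{A}^2_\alpha(\mathbb{D}).
\]
We verify this on kernel functions, or equivalently through matrix entries. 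On the $\mathcal{A}^2_\alpha$ side we have $\langle C_\phi^* K_w, K_u\rangle = K_{\phi(w)}(u) = (1-\overline{\phi(w)}u)^{-(\alpha+2)}$.

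On the other side, $V$ is a weighted shift. Taking $\beta_\alpha(j+1)^2=1/\gamma_\alpha(j)^2$ into account, $V$ acts on $h\in H^2(\gamma_\alpha)$ by $(Vh)(z)=z\,(Mh)(z)$, where $M$ multiplies the $j$th Taylor coefficient by $\gamma_\alpha(j)^2$. This is the identification behind the choice of $\beta_\alpha$: it is a Hardy-space–Bergman-space duality of the type in Theorem D$'$ of \cite{maccluer1996composition}. I would rather compute directly with monomials. I would compare $\langle C_\sigma z^{k+1}, z^{j+1}\rangle_{\beta_\alpha}$ with $a\langle C_\phi^* z^k, z^j\rangle_{\gamma_\alpha}=a\langle z^k,\phi^j\rangle_{\gamma_\alpha}$. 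Expanding $\sigma^{k+1}=a^{k+1}z^{k+1}(1-(1-a)z)^{-(k+1)}$ binomially and comparing with $\phi^j=(az+1-a)^j$, both sides reduce to the same product of $a^{k+1}$, $(1-a)^{j-k}$ and a binomial coefficient $\binom{j}{k}$. The normalizations $\beta_\alpha(j+1)^2\gamma_\alpha(j)\gamma_\alpha(k)/\beta_\alpha(k+1)$ cancel exactly.

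\textbf{Step 2: passing to the iterates.} Aluthge transforms respect unitary equivalence and orthogonal direct sums. If $T=T_1\oplus T_2$, then $|T|=|T_1|\oplus|T_2|$, and the polar parts split the same way, so $\widetilde T=\widetilde{T_1}\oplus\widetilde{T_2}$. Likewise $\widetilde{VSV^*}=V\widetilde S V^*$, and $\widetilde{cS}=c\widetilde S$ for $c>0$.

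The scalar $1$ on the one-dimensional summand is its own Aluthge transform. Induction on $n$ then gives \eqref{C sigma n}. By Lemma \ref{lemma4.1}, the right-hand side is explicitly $1\oplus V(aT_gC_\theta)V^*$.

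\textbf{Main obstacle.} The main obstacle is the coefficient identity in Step 1, that is, checking that the chosen weights $\beta_\alpha$ make $V^*C_\sigma V=aC_\phi^*$ exactly. If the monomial computation becomes messy, my fallback is to test both sides against reproducing kernels. The kernel of $zH^2(\beta_\alpha)$ is $k(\overline w z)-1=\sum_{j\ge1}\gamma_\alpha(j-1)^2(\overline w z)^j$, and the identity then follows from $C_\sigma^*$ acting on kernels together with $\sigma(z)=\phi^{-1}(1/z)^{-1}$-type duality.
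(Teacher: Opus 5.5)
Your proposal is correct. Its skeleton matches the paper's: first the base case $C_\sigma=1\oplus V(aC_\phi^*)V^*$, then induction. Both steps, however, are carried out differently.

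\textbf{Base case.} The paper invokes Theorem D$'$ of \cite{maccluer1996composition}. That gives at once that $C_\sigma$ is bounded on $H^2(\beta_\alpha)$ and that $V^*\bigl(C_\sigma^*|_{zH^2(\beta_\alpha)}\bigr)V=T_aC_\phi$. You instead verify the intertwining directly, and the computation does close:
\begin{itemize}
\item the coefficient of $z^{j+1}$ in $\sigma^{k+1}$ is $a^{k+1}\binom{j}{k}(1-a)^{j-k}$;
\item $a\langle z^k,\phi^j\rangle_{\gamma_\alpha}=\gamma_\alpha(k)^2a^{k+1}\binom{j}{k}(1-a)^{j-k}$;
\item the prefactor produced by $V$ is $\gamma_\alpha(k)\gamma_\alpha(j)\beta_\alpha(j+1)/\beta_\alpha(k+1)=\gamma_\alpha(k)^2$.
\end{itemize}
You wrote $\beta_\alpha(j+1)^2$ in that prefactor; with the square it would not cancel.

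Your Step 1 also takes the boundedness of $C_\sigma$ for granted. It follows from the same coefficient computation, which shows $p\circ\sigma=Bp$ for every polynomial $p$, where $B=1\oplus V(aC_\phi^*)V^*$ is bounded. Continuity of point evaluations on $H^2(\beta_\alpha)$ then forces $f\circ\sigma=Bf$ for all $f$.

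\textbf{Inductive step.} The paper computes $|\widetilde{C_\sigma}^{(k)}|$ by hand. It then assembles the polar factor $W=1\oplus VU'V^*$ from the explicit unitary $U'$ of Lemma \ref{lemma4.1}, and checks that the kernels are trivial. You use only that the Aluthge transform commutes with orthogonal direct sums, unitary conjugation and multiplication by positive scalars.

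Your route is shorter. It shows that \eqref{C sigma n} holds without any information about $\widetilde{C_\phi^*}^{(n)}$; Lemma \ref{lemma4.1} is needed only to make the right-hand side explicit. The paper's explicit, $n$-independent $W$ is what it reuses for binormality in Corollary \ref{corollary4.1}. Your functorial facts yield the same $W$ as soon as the polar factor $U'$ is known.
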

\begin{proof}
        From the choice of $\gamma_\alpha$ given by \eqref{gamma}, we have $\mathcal{A}_\alpha^2(\mathbb{D})=H^2(\gamma_\alpha)$. Now, $C_\phi$ is a bounded operator on $H^2(\gamma_\alpha)$ and $\{\gamma_\alpha(j)\}$ is a positive sequence satisfying $\displaystyle \lim_{j\to\infty}\frac{\gamma_\alpha(j+1)}{\gamma_\alpha(j)}=1$. Hence by Theorem D$^\prime$ of \cite{maccluer1996composition}, $C_\sigma$ is bounded on $H^2(\beta_\alpha)$, with the restriction of $C_\sigma^*$ to $zH^2(\beta_\alpha)$ unitarily equivalent to $T_a C_\phi$ on $H^2(\gamma_\alpha)$, where $\beta_\alpha$ is given by \eqref{beta}. Using this, we have $$V^*\left(\restr{C_\sigma^*}{zH^2(\beta_\alpha)}\right)V=T_a C_\phi$$
    where the operator $V$ given in \eqref{V} is unitary (see \cite{maccluer1996composition}). Therefore, taking adjoint, we get,
    $$V^*\left(\restr{C_\sigma}{zH^2(\beta_\alpha)}\right)V=T_aC_\phi ^*.$$ Since $\sigma(0)=0$, so $(zH^2(\beta_\alpha))^\perp=$ span$\{1\}$ is a reducing subspace of $C_\sigma$. Thus
    \begin{align*}
        C_\sigma=1 \oplus V\left(aC_\phi ^*\right)V^*
    \end{align*}
    on $(zH^2(\beta_\alpha))^\perp \oplus(zH^2(\beta_\alpha))$. 
    %By Lemma \ref{lemma4.1}, we have the iterated Aluthge transform of $C_\phi^*$ on $\mathcal{A}_\alpha^2(\mathbb{D)}$. Using this, the iterated Aluthge transform of $C_\sigma$ on $H^2(\beta)$ can be computed similarly to the method used in \cite{jung2015iterated} for the classical Hardy space $H^2$. However, since $\beta(n)$ does not correspond to the weight function as in the weighted Bergman space $\mathcal{A}_\alpha^2(\mathbb{D})$, the problem of determining the iterated Aluthge transform of $C_\sigma$ on $\mathcal{A}_\alpha^2(\mathbb{D)}$ still remains open.
    Hence this theorem is true for $n=0$. Assume that 
    $$\widetilde{C_{\sigma}}^{(k)}= 1 \oplus V\left(a\widetilde{C_{\phi}^*}^{(k)}\right)V^*$$for some non-negative integer $k$. Then we get,
    \begin{eqnarray*}
        |\widetilde{C_{\sigma}}^{(k)}|^2 &=& (\widetilde{C_{\sigma}}^{(k)})^*\widetilde{C_{\sigma}}^{(k)}=\left(1 \oplus V\left(a(\widetilde{C_{\phi}^*}^{(k)})^*\right)V^*\right)\left(1 \oplus V\left(a\widetilde{C_{\phi}^*}^{(k)}\right)V^*\right)\\
        &=& 1\oplus V\left(a^2(\widetilde{C_{\phi}^*}^{(k)})^*(\widetilde{C_{\phi}^*}^{(k)})\right)V^*\\
        &=& 1\oplus V\left(a^2|\widetilde{C_{\phi}^*}^{(k)}|^2\right)V^*.\\
    \mbox{Thus, } |\widetilde{C_{\sigma}}^{(k)}| &=& 1\oplus V\left(a|\widetilde{C_{\phi}^*}^{(k)}|\right)V^* \\
\mbox{and } |\widetilde{C_{\sigma}}^{(k)}|^{1/2} &=& 1\oplus V\left(a^{1/2}|\widetilde{C_{\phi}^*}^{(k)}|^{1/2} \right)V^*.
\end{eqnarray*}
    If $\widetilde{C_{\phi}^*}^{(k)}=U'|\widetilde{C_{\phi}^*}^{(k)}|$ is the polar decomposition of $\widetilde{C_{\phi}^*}^{(k)}$ where $U'$ is defined in \eqref{U'}, then 
    \begin{eqnarray*}
        (1\oplus VU'V^*)|\widetilde{C_{\sigma}}^{(k)}|&=&(1\oplus VU'V^*)\left(1\oplus V\left(a|\widetilde{C_{\phi}^*}^{(k)}| \right)V^*\right)\\
        &=& 1\oplus V(a\widetilde{C_{\phi}^*}^{(k)})V^* = \widetilde{C_{\sigma}}^{(k)}.
    \end{eqnarray*}
    Thus $\widetilde{C_{\sigma}}^{(k)}=W|\widetilde{C_{\sigma}}^{(k)}|$ where $W=1\oplus VU'V^*$ is unitary as $U'$ is unitary (from proof of Lemma \ref{lemma4.1}). Also, since $\text{ker}(\widetilde{C_{\phi}^*}^{(k)})=\text{ker}(U')=\{0\}$, so $\text{ker}(\widetilde{C_{\sigma}}^{(k)})=\text{ker}(W)=\{0\}$. Hence, this is the polar decomposition of $\widetilde{C_{\sigma}}^{(k)}$. Now we see that
    \begin{eqnarray*}
        \widetilde{C_{\sigma}}^{(k+1)}&=& |\widetilde{C_{\sigma}}^{(k)}|^{1/2} W |\widetilde{C_{\sigma}}^{(k)}|^{1/2}\\
        &=& \left(1\oplus V\left(a^{1/2}|\widetilde{C_{\phi}^*}^{(k)}|^{1/2} \right)V^*\right)\left(1\oplus VU'V^*\right)\left( 1\oplus V\left(a^{1/2}|\widetilde{C_{\phi}^*}^{(k)}|^{1/2} \right)V^*\right)\\
        &=& 1\oplus V\left(a|\widetilde{C_{\phi}^*}^{(k)}|^{1/2}U' |\widetilde{C_{\phi}^*}^{(k)}|^{1/2} \right)V^*\\
        &=& 1\oplus V\left(a\widetilde{C_{\phi}^*}^{(k+1)}\right)V^*
    \end{eqnarray*}
    Thus, by induction, the result follows for $C_\sigma$ on $H^2(\beta_\alpha)$. %Also, we have from Lemma \ref{lemma4.1},
    %\begin{align*}
     %   \widetilde{C_\sigma}^{(n)}(h(z))=&h(0)\oplus V(aT_gC_\theta)V^*(h(z)-h(0))
    %\end{align*}for any $h\in H^2(\beta)$.
\end{proof}

%An operator $T\in B(H)$ is said to be $p-$hyponormal $(0<p<\infty)$ if it satisfies $(T^*T)^p\geq (TT^*)^p$. If $p=1$, $T$ is called hyponormal. Aluthge transform is a good tool for studying various classes of operators, especially $p-$hyponormal operators. For more details, one may refer to \cite{aluthge1990p,bong2000aluthge}.

In \cite{jung2015iterated}, Jung et al. investigated the iterated Aluthge transform of $C_\sigma$ on the Hardy space $H^2$ where the symbol $\sigma(z)=\frac{az}{-(1-a)z+1}$ for $0<a<1$. In this paper, we consider $\widetilde{C_{\sigma}}^{(n)}$ on some weighted Hardy space $H^2(\beta_\alpha)$ where the weight sequence $\{\beta_\alpha(j)\}$ is given in \eqref{beta}. 
%Since this weight differs from that of the weighted Bergman space $\mathcal{A}_\alpha^2(\mathbb{D})$, the problem of determining the iterated Aluthge transform of $C_\sigma$ on $\mathcal{A}_\alpha^2(\mathbb{D})$ remains open.\\
Now we give some properties of $\widetilde{C_{\sigma}}^{(n)}$ defined on $H^2(\beta_\alpha)$.

\begin{corollary}\label{corollary4.1}
    Let $\sigma(z)=\frac{az}{-(1-a)z+1}$ where $0<a<1$, $\alpha>-1$ and $\beta_\alpha(j)$ be defined by \eqref{beta}. Then $\widetilde{C_{\sigma}}^{(n)}$ defined on $H^2(\beta_\alpha)$ is not quasinormal, hence not normal, but it is binormal for any non-negative integer $n$. 
%Also, $C_{\sigma}^*$ defined on $H^2(\beta_\alpha)$ is not hyponormal for $\alpha>0$.
\end{corollary}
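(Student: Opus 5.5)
Both claims will be read off from the decomposition of Theorem \ref{theorem4.1},
$$\widetilde{C_{\sigma}}^{(n)}=1 \oplus V\left(a\widetilde{C_{\phi}^*}^{(n)}\right)V^*,$$
together with the polar decomposition $\widetilde{C_{\sigma}}^{(n)}=W|\widetilde{C_{\sigma}}^{(n)}|$, $W=1\oplus VU'V^*$, found in its proof.

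\textbf{Binormality.} We use the same route as Corollary \ref{corollary2.2}. The partial isometry (here unitary) in the polar decomposition of $\widetilde{C_{\sigma}}^{(n)}$ is $W=1\oplus VU'V^*$ for every $n$. This holds because $U'$ in \eqref{U'} does not depend on $n$, as recorded in the proof of Lemma \ref{lemma4.1}. Hence the polar factor of $T=\widetilde{C_\sigma}^{(0)}$ coincides with that of $\widetilde{T}^{(n)}$ for all $n$, and Theorem 3.1 of \cite{ito2004polar} gives that every $\widetilde{C_{\sigma}}^{(n)}$ is binormal.

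A direct alternative is also available. Binormality is preserved by unitary equivalence, by scalar multiples and by direct sums with the trivial operator $1$. Since $[T^*T,TT^*]$ splits along the direct sum, it suffices that $\widetilde{C_\phi^*}^{(n)}$ is binormal on $\mathcal{A}_\alpha^2(\mathbb{D})$. That follows by applying the same Ito--Yamazaki criterion to the iterates of $C_\phi^*$, whose unitary factor is constantly $U'$.

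\textbf{Non-quasinormality.} We plan to use Proposition 1.10 of \cite{bong2000aluthge}: $T$ is quasinormal iff $\widetilde T=T$. If some $\widetilde{C_\sigma}^{(n)}$ were quasinormal, then $\widetilde{C_\sigma}^{(n+1)}=\widetilde{C_\sigma}^{(n)}$. Compressing to $zH^2(\beta_\alpha)$ and conjugating by $V$, this would give $\widetilde{C_\phi^*}^{(n+1)}=\widetilde{C_\phi^*}^{(n)}$. To refute it, test on $K_0=1\in\mathcal{A}_\alpha^2(\mathbb{D})$. By Lemma \ref{lemma4.1},
$$\langle \widetilde{C_\phi^*}^{(n)}K_0,K_0\rangle=g_n(0)=\left(\frac{2^{n+1}}{(1+a)((1-a)(1+a)^{n-1}+2^n)}\right)^{\alpha+2}.$$
We then check that $g_n(0)\neq g_{n+1}(0)$. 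Writing $u=(1+a)/2\in(1/2,1)$, the base of $g_n(0)$ equals $2/((1-a)u^n+(1+a))$. This is strictly monotone in $n$ because $u^n$ is strictly decreasing and $1-a>0$. So the values differ, and no iterate is quasinormal.

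Non-normality then follows because normal operators are quasinormal. The only delicate point is confirming that the unitary factor is indeed $n$-independent, which the proof of Theorem \ref{theorem4.1} supplies.
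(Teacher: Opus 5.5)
Your proposal is correct and follows essentially the same route as the paper: non-quasinormality via Proposition 1.10 of \cite{bong2000aluthge}, by comparing $g_n(0)$ with $g_{n+1}(0)$ from Lemma \ref{lemma4.1} after reducing through the decomposition of Theorem \ref{theorem4.1}, and binormality via Theorem 3.1 of \cite{ito2004polar}, using the $n$-independent unitary factor $W=1\oplus VU'V^*$. Your check that the base $2/((1-a)u^n+(1+a))$ is strictly increasing in $n$ fills in a step the paper only asserts.
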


\begin{proof}
     From Theorem \ref{theorem4.1}, we have     \begin{align*}\widetilde{C_{\sigma}}^{(n)}=1 \oplus V\left(a\widetilde{C_{\phi}^*}^{(n)}\right)V^*
    \end{align*}
    on $(zH^2(\beta_\alpha))^\perp \oplus(zH^2(\beta_\alpha))$ where $\phi(z)=az+(1-a)$ and the unitary operator $V$ given by \eqref{V}. Thus it is sufficient to prove that $\widetilde{C_{\phi}^*}^{(n)}$ defined on $\mathcal{A}_\alpha^2(\mathbb{D})$ is not quasinormal. Now by \eqref{K_w},
    $$\langle \widetilde{C_\phi^*}^{(n)} K_0,K_0\rangle=g(0)=\left(\frac{2^{n+1}}{(1+a)((1-a)(1+a)^{n-1}+2^n)}\right)^{\alpha+2}$$ from Lemma \ref{lemma4.1}. Thus, $\langle \widetilde{C_\phi^*}^{(n)} K_0,K_0\rangle \neq \langle \widetilde{C_\phi^*}^{(n+1)} K_0,K_0\rangle$, which implies that $\widetilde{C_\phi^*}^{(n)}$ is not quasinormal by Proposition 1.10 of \cite{bong2000aluthge}. Now, from the proof of Theorem \ref{theorem4.1} the polar decomposition of $\widetilde{C_\sigma}^{(n)}$ is $W|\widetilde{C_{\sigma}}^{(n)}|$ where $W=1\oplus VU'V^*$ and $U'$ is given by \eqref{U'}. We observe that $W$ is independent of $n$. Thus from Theorem 3.1 of \cite{ito2004polar}, we have $\widetilde{C_\sigma}^{(n)}$ is binormal.
    
    %Now for $\alpha>0$, $\displaystyle \sum_{j=0}^\infty\frac{1}{\beta_\alpha(j)^2}<\infty$. Since $\sigma(z)\neq\lambda z$ for $|\lambda|\leq1$, by Theorem 2 of \cite{zorboska1991hyponormal}. we get that $C_{\sigma}^*$ is not hyponormal.
\end{proof}

\begin{corollary}\label{corollary4.2}
    Let $\sigma(z)=\frac{az}{-(1-a)z+1}$ where $0<a<1$, $\alpha>-1$ and $\beta_\alpha(j)$ be defined by \eqref{beta}. Then for $\widetilde{C_{\sigma}}^{(n)}$ defined on $H^2(\beta_\alpha)$, we have the following properties:
    \begin{enumerate}[(i)]
        \item The interior of $W(\widetilde{C_\sigma}^{(n)})$ contains $0$.
        % \item The spectrum of $\sigma(\widetilde{C_{\sigma}}^{(n)})$ is $\left\{\lambda \in \mathbb{C}:|\lambda|\leq \frac{1}{a^{\alpha/2}}\right\} \cup \{1\}$.
        \item $\|\widetilde{C_\sigma}^{n}\|=\sup\ \left\{1,\frac{1}{a^{\alpha/2}}\right\}$.
        % and $\widetilde{C_\sigma}^{(n)}$ is normaloid, hence spectraloid.
     \end{enumerate}
\end{corollary}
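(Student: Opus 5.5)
The plan is to read off both properties from the decomposition in Theorem \ref{theorem4.1},
$$\widetilde{C_{\sigma}}^{(n)}=1 \oplus V\left(a\widetilde{C_{\phi}^*}^{(n)}\right)V^*$$
on $(zH^2(\beta_\alpha))^\perp \oplus(zH^2(\beta_\alpha))$, with $V$ unitary. This reduces everything to facts about $\widetilde{C_{\phi}^*}^{(n)}$ on $\mathcal{A}_\alpha^2(\mathbb{D})$ that are already available from Lemma \ref{lemma4.1} and its proof.

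For (ii), the norm of an orthogonal direct sum is the maximum of the norms of the summands, and conjugation by the unitary $V$ preserves norms. Hence
$$\|\widetilde{C_\sigma}^{(n)}\|=\max\left\{1,\ a\|\widetilde{C_{\phi}^*}^{(n)}\|\right\}.$$
By \eqref{normAt} we have $\|\widetilde{C_{\phi}^*}^{(n)}\|^2=\|(\widetilde{C_\phi^*}^{(n)})^*\widetilde{C_\phi^*}^{(n)}\|=a^{-(\alpha+2)}\|A_t\|$ with $t=\frac{(1-a)(1+a)^n}{2^n}>0$. Corollary 18 of \cite{cowen2013hermitian}, exactly as used in the proof of Corollary \ref{corollary2.3}(ii), gives $\|A_t\|=1$. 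Therefore
$$a\|\widetilde{C_{\phi}^*}^{(n)}\|=a\cdot a^{-(\alpha+2)/2}=a^{-\alpha/2},$$
which yields the stated formula $\sup\{1,a^{-\alpha/2}\}$. This step is routine.

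For (i), I would mimic Corollary \ref{corollary2.3}(i) and apply Corollary 1 of \cite{embry1970similarities} directly to $\widetilde{C_\sigma}^{(n)}$. Corollary \ref{corollary4.1} already shows that $\widetilde{C_\sigma}^{(n)}$ is binormal and not normal, which are exactly the hypotheses of Embry's result. As a cross-check, I would also argue through the summands. The numerical range of $1\oplus V(a\widetilde{C_{\phi}^*}^{(n)})V^*$ is the convex hull of $\{1\}$ and $aW(\widetilde{C_{\phi}^*}^{(n)})$. The operator $\widetilde{C_{\phi}^*}^{(n)}$ is binormal, since its polar unitary $U'$ in \eqref{U'} does not depend on $n$ and Theorem 3.1 of \cite{ito2004polar} applies. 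It is also non-normal, as shown in the proof of Corollary \ref{corollary4.1}. So $0$ lies in the interior of $W(\widetilde{C_{\phi}^*}^{(n)})$, and a convex hull containing a set with $0$ in its interior keeps $0$ in its interior.

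The main obstacle is (i): one has to make sure the cited Embry corollary really delivers an interior point rather than mere membership $0\in W$. If it only gives membership, I would fall back on the following argument. First obtain $0\in W(\widetilde{C_{\phi}^*}^{(n)})$. Then use that the spectrum $\sigma(C_\phi^*)$ is a disc centred at $0$ of positive radius (via Theorem 1.3 of \cite{bong2000aluthge}, spectra being preserved under Aluthge iteration). This disc lies inside the closure of the numerical range. Combined with convexity, this forces $0$ into the interior.
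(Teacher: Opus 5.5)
Your proposal is correct, and its main line is the paper's. Part (ii) is exactly the paper's computation: the norm of the orthogonal sum from Theorem~\ref{theorem4.1}, then \eqref{normAt} and $\|A_t\|=1$. Part (i) is the paper's appeal to Corollary~\ref{corollary4.1} plus Embry's criterion, as in Corollary~\ref{corollary2.3}(i).

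Where you go beyond the paper is the fallback for (i), and your caution there is justified. The binormal/non-normal criterion only puts $0$ in $W$ (or its closure); the paper's own proof of Corollary~\ref{corollary2.3}(i) concludes just $0\in W(\widetilde{C_\phi}^{(n)})$. So it is your spectral argument, not the paper's, that actually gives an interior point.

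To make the fallback complete, note that Theorem~1.3 of Jung--Ko--Pearcy only carries the spectrum over to the iterates. The disc itself has to be supplied separately, e.g.\ from Hurst's computation of $\sigma(C_\phi)$, or directly:
\begin{itemize}
\item $(1-z)^\beta\in\mathcal{A}_\alpha^2(\mathbb{D})$ whenever $\mathrm{Re}\,\beta>-(\alpha+2)/2$, and $C_\phi(1-z)^\beta=a^\beta(1-z)^\beta$.
\item So the eigenvalues $a^\beta$ fill the punctured disc $0<|\lambda|<a^{-(\alpha+2)/2}$.
\item Hence $\sigma(C_\phi)$, and $\sigma(C_\phi^*)$ (its conjugate), both contain the same closed disc.
\end{itemize}
Then $\sigma\subseteq\overline{W}$ and convexity give $0\in\operatorname{int}W(\widetilde{C_\phi^*}^{(n)})$. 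Since $W(\widetilde{C_\sigma}^{(n)})\supseteq a\,W(\widetilde{C_\phi^*}^{(n)})$ (test on vectors $0\oplus Vx$), you get (i). Your preliminary step ``first obtain $0\in W$'' is not needed.
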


\begin{proof}
\begin{enumerate}[(i)]
    \item It follows from Corollary \ref{corollary4.1} by the same argument given in Corollary \ref{corollary2.3}(i).
    % \item Since $\beta_\alpha(j)=(j)^{\frac{\alpha+1}{2}}$ is equivalent to the weight sequence $\tilde{\beta_\alpha}(j)=(j+1)^{\frac{\alpha+1}{2}}$ jor any non-negative integer $j$, by Corollary 10 of \cite{hurst1997relating}, we have 
    % $$\sigma(C_\sigma)=\left\{\lambda \in \mathbb{C}:|\lambda|\leq \frac{1}{a^{\alpha/2}}\right\} \cup \{1\}.$$
    % From Theorem 1.3 of \cite{bong2000aluthge}, $\sigma(C_\sigma)=\sigma(\widetilde{C_{\sigma}}^{(n)})$, hence the result follows.
    \item Let $n$ be any non-negative integer and $\phi(z)=az+(1-a)$. Now from \eqref{normAt}, we have
   \begin{equation*} \left\|\widetilde{C_{\phi}^*}^{(n)}\right\|^2
   % =\left\|(\widetilde{C_{\phi}^*}^{(n)})^*\widetilde{C_{\phi}^*}^{(n)}\right\|
   % =\left\| \frac{1}{a^{\alpha+2}}T_{(\frac{1}{-t  z+1+t })^{\alpha+2}}C_{\frac{(1-t )z+s }{-t  z+1+t }}\right\|
   =\left\|\frac{1}{a^{\alpha+2}} A_t\right\|
   \end{equation*}
   where $t=\frac{(1-a)(1+a)^n}{2^n}$. Again, $||A_t||=1$
   by Corollary 18 of \cite{cowen2013hermitian}. Therefore, $\|\widetilde{C_{\phi}^*}^{(n)}\|=\frac{1}{a^{\frac{\alpha+2}{2}}}$ and thus $\|a\widetilde{C_{\phi}^*}^{(n)}\|=\frac{1}{a^{\alpha/2}}$ for all $n$. Hence from \eqref{C sigma n}, it follows that $\|\widetilde{C_\sigma}^{n}\|=\sup\ \left\{1,\frac{1}{a^{\alpha/2}}\right\}$ for all $n$. 
   % Since $r(\widetilde{C_\sigma}^{n})=\sup \ \left\{1,\frac{1}{a^{\alpha/2}}\right\}$ from (b), it follows that $\widetilde{C_\sigma}^{(n)}$ is normaloid. $\widetilde{C_\sigma}^{(n)}$ is spectraloid follows from the fact that every normaloid operator is spectraloid.
    \end{enumerate}
\end{proof}

\noindent Now, we discuss the convergence of iterated Aluthge transform of $C_\sigma$ on $H^2(\beta_\alpha)$ where $\sigma(z)=\frac{az}{-(1-a)z+1}$ with $0<a<1$ and $\beta_\alpha(j)$ is defined by \eqref{beta}. For this we need the following corollary.

\begin{corollary}\label{corollary4.3}
     Let $\phi(z)=az+(1-a)$ where $0<a<1$. Then, for $\alpha>-1$, the iterated Aluthge tranform of $C_\phi ^*$ on $\mathcal{A}_\alpha^2(\mathbb{D})$ is $$\widetilde{C_{\phi}^*}^{(n)}=C_{\Theta}^* T_{G}^*$$ 
     where $G$ and $\Theta$ are given by 
     \begin{equation}\label{G Theta}\footnotesize
     \begin{split}
          G(z)&=\left(\frac{2^{n+1}}{-(1-a)((1+a)^n -2^n)z +(1+a)((1-a)(1+a)^{n-1}+2^n)}\right)^{\alpha+2},\\
         \Theta(z)&=\frac{-(1+a)((1-a)(1+a)^{n-1}-2^n)z+(1-a)((1+a)^n +2^n)}{-(1-a)((1+a)^n -2^n)z +(1+a)((1-a)(1+a)^{n-1}+2^n)}.
     \end{split}
     \end{equation}
\end{corollary}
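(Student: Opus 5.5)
The plan is to mirror Corollary \ref{Corollary2.2}: start from Lemma \ref{lemma4.1}, which gives $\widetilde{C_{\phi}^*}^{(n)}=T_gC_\theta$, and turn this weighted composition operator into the adjoint of another one using Lemma \ref{lemma2.1}. Lemma \ref{lemma2.1} says that for a linear fractional self-map $\Theta(z)=\frac{Az+B}{Cz+D}$ we have $C_\Theta^*T_r^*=T_\delta C_\sigma$, with
\[
\sigma(z)=\frac{\overline{A}z-\overline{C}}{-\overline{B}z+\overline{D}},\quad \delta(z)=\frac{1}{(-\overline{B}z+\overline{D})^{\alpha+2}},\quad r(z)=\frac{1}{(Cz+D)^{\alpha+2}}.
\]
So I will choose $\Theta$ whose ``dual'' map $\sigma$ equals $\theta$, and a constant multiple so that $\delta$ matches $g$ and $r$ matches $G$.

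Write $\theta(z)=\frac{pz+q}{-uz+v}$ with real coefficients
\[
p=-(1+a)((1-a)(1+a)^{n-1}-2^n),\qquad q=(1-a)((1+a)^n-2^n),
\]
\[
u=(1-a)((1+a)^n+2^n),\qquad v=(1+a)((1-a)(1+a)^{n-1}+2^n).
\]
Matching $\sigma=\theta$ and $-\overline{B}z+\overline{D}=-uz+v$ forces $A=p$, $B=u$, $D=v$ and $C=-q$. Hence
\[
\Theta(z)=\frac{pz+u}{-qz+v}.
\]
Expanding $-q=-(1-a)((1+a)^n-2^n)$ and $u=(1-a)((1+a)^n+2^n)$ gives exactly the $\Theta$ of \eqref{G Theta}. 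The denominators agree, since $v$ is common to both.

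For the weights, $\delta(z)=(-uz+v)^{-(\alpha+2)}$, so $g=(2^{n+1})^{\alpha+2}\delta$. Then
\[
T_gC_\theta=(2^{n+1})^{\alpha+2}\,C_\Theta^*T_r^*=C_\Theta^*T_G^*,
\]
where $G=(2^{n+1})^{\alpha+2}r=\left(\frac{2^{n+1}}{-qz+v}\right)^{\alpha+2}$. This is the stated $G$, and the scalar $(2^{n+1})^{\alpha+2}$ is real, so conjugation does not affect it.

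The main obstacle is checking that Lemma \ref{lemma2.1} (via Hurst's Theorem \ref{Theorem2.1}) applies, which requires $\Theta$ to be a self-map of $\mathbb{D}$ with $pv+uq\neq0$ and $G\in H^\infty$. For $G\in H^\infty$ it suffices that $|v|>|q|$, since then $-qz+v$ does not vanish on $\overline{\mathbb{D}}$. Here $v>0$, and $|q|<v$ follows from $(1+a)^n-2^n<0$ for $n\geq1$ (for $n=0$, $q=0$). For the self-map property I would note that $\Theta$ is the Cowen adjoint map of $\theta$, and the adjoint map of a linear fractional self-map is again a self-map. Alternatively, one checks $\Theta(1)=1$ and that the real-coefficient map sends $[-1,1]$ into $\mathbb{D}\cup\{1\}$. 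For $n=0$ it suffices to note that $\Theta=\phi$ and $G=1$, which is consistent because $\widetilde{C_\phi^*}^{(0)}=C_\phi^*$.
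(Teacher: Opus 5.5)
Your proposal is correct and takes the same route as the paper, whose proof simply combines Lemma \ref{lemma4.1} with Lemma \ref{lemma2.1}. You identify $\Theta$ as the Cowen adjoint of $\theta$, absorb the positive constant $(2^{n+1})^{\alpha+2}$ into $G$, and your coefficient matching reproduces \eqref{G Theta} exactly. Your verification of the hypotheses (the self-map property, $|q|<v$ so that $G\in H^\infty$, and the $n=0$ case) goes beyond what the paper writes, and the determinant condition you list also holds, since $pv+uq=4^{n+1}a\neq 0$.
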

\begin{proof}
     Proof follows by using Lemma \ref{lemma4.1} and Lemma \ref{lemma2.1}.
\end{proof}

\begin{theorem}
    Let $\sigma(z)=\frac{az}{-(1-a)z+1}$ where $0<a<1$, $\alpha>-1$ and $\beta_\alpha(j)$ be defined by \eqref{beta}. Then $\widetilde{C_{\sigma}}^{(n)}$ defined on $H^2(\beta_\alpha)$ converges to the normal operator $$(K_0\otimes K_0)\oplus V\left(\frac{1}{a^{\alpha/2}}T_{\left({\frac{2\sqrt{a}}{-(1-a)z+(1+a)}}\right)^{\alpha+2}}C_{\frac{(1+a)z-(1-a)}{-(1-a)z(1+a)}}\right)V^*$$ with respect to the strong operator topology where $V$ is given by \eqref{V}.
\end{theorem}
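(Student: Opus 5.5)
The plan is to push everything through the decomposition of Theorem \ref{theorem4.1}. Since
$$\widetilde{C_{\sigma}}^{(n)}=1\oplus V\left(a\widetilde{C_{\phi}^*}^{(n)}\right)V^*$$
with $V$ unitary and independent of $n$, strong convergence of $\widetilde{C_{\sigma}}^{(n)}$ reduces to strong convergence of $\widetilde{C_{\phi}^*}^{(n)}$ on $\mathcal{A}_\alpha^2(\mathbb{D})$. Conjugation by a fixed unitary preserves SOT limits, and so does forming a direct sum with a fixed operator.

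The first summand needs no work. In $H^2(\beta_\alpha)$ we have $\beta_\alpha(0)=1$, so $K_0=k(0)=1$ and $\|K_0\|=1$. Hence the identity on $(zH^2(\beta_\alpha))^\perp=\mathrm{span}\{1\}$, extended by $0$, is exactly $K_0\otimes K_0$. It therefore suffices to show
$$\widetilde{C_{\phi}^*}^{(n)}\to a^{-(\alpha+2)/2}\,U' \quad\text{(SOT)},$$
with $U'$ as in \eqref{U'}, since $a\cdot a^{-(\alpha+2)/2}=a^{-\alpha/2}$.

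For this I use the polar decomposition from the proof of Lemma \ref{lemma4.1}: $\widetilde{C_{\phi}^*}^{(n)}=U'|\widetilde{C_{\phi}^*}^{(n)}|$ with $U'$ independent of $n$. By \eqref{normAt} and the semigroup property,
$$|\widetilde{C_{\phi}^*}^{(n)}|=a^{-(\alpha+2)/2}A_{t_n/2},\qquad t_n=\frac{(1-a)(1+a)^n}{2^n}.$$
Here $t_n\to0$ because $\frac{1+a}{2}<1$. Since $U'$ is a fixed bounded operator, everything reduces to the key step: $A_t\to I$ strongly as $t\to0^+$.

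That key step is the main obstacle, and I would prove it on kernel functions. Each $A_t$ with $t>0$ is a positive operator, since by Lemma \ref{Lemma2.2} it is a positive multiple of a power of $C_{\sigma_s}^*C_{\sigma_s}$. So $A_t=A_t^*$ and
$$A_tK_\omega=\overline{f_t(\omega)}\,K_{\phi_t(\omega)}.$$
Expanding the norm,
$$\|A_tK_\omega-K_\omega\|^2=|f_t(\omega)|^2\|K_{\phi_t(\omega)}\|^2-2\,\mathrm{Re}\big(\overline{f_t(\omega)}\,K_{\phi_t(\omega)}(\omega)\big)+\|K_\omega\|^2,$$
and every term is explicit by \eqref{K_w} and \eqref{norm K_w}. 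As $t\to0$ we have $f_t(\omega)\to1$ and $\phi_t(\omega)\to\omega$ for fixed $\omega\in\mathbb{D}$, and the kernel expressions are continuous in the point. Hence the right-hand side tends to $\|K_\omega\|^2-2\|K_\omega\|^2+\|K_\omega\|^2=0$.

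To pass from kernels to all of $\mathcal{A}_\alpha^2(\mathbb{D})$, note that $\|A_t\|=1$ for all $t>0$ by Corollary 18 of \cite{cowen2013hermitian}, and that $\mathrm{span}\{K_\omega\}$ is dense. A standard $\varepsilon/3$ argument, exactly as at the end of the proof of Theorem \ref{theorem3.1}, then gives $A_t\to I$ in SOT. An alternative route is to cite strong continuity at $0$ of the analytic semigroup from \cite{cowen2013hermitian}.

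Finally, normality. The limit on $zH^2(\beta_\alpha)$ is $a^{-\alpha/2}VU'V^*$, a scalar multiple of a unitary, hence normal. The summand $K_0\otimes K_0$ is an orthogonal projection, hence normal. A direct sum of normal operators is normal, which completes the proof.
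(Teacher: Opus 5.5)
Your argument is correct, but it reaches the key convergence by a different route than the paper.

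\textbf{The paper's route.} The paper does not use the polar decomposition in this proof. It takes the explicit adjoint form $\widetilde{C_\phi^*}^{(n)}=C_\Theta^*T_G^*$ from Corollary \ref{corollary4.3} and writes $\widetilde{C_\phi^*}^{(n)}K_\omega=\overline{G(\omega)}K_{\Theta(\omega)}$ as a closed-form function that depends on $n$ only through $\left(\frac{1+a}{2}\right)^n$. It computes $U'K_\omega$ via Lemma \ref{lemma2.1} and lets $\left(\frac{1+a}{2}\right)^n\to 0$ directly in the formulas. It then finishes with the same density and uniform-boundedness argument you use.

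\textbf{Your route.} You use only two structural facts: the polar factor $U'$ does not depend on $n$, and $|\widetilde{C_\phi^*}^{(n)}|=a^{-(\alpha+2)/2}A_{t_n/2}$ with $t_n\to 0$. The whole statement then reduces to strong continuity of the semigroup $A_t$ at $t=0$. You check this on kernels by expanding $\|A_tK_\omega-K_\omega\|^2$, using the self-adjointness of $A_t$ that follows from Lemma \ref{Lemma2.2} with $p=1$.

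\textbf{Comparison.}
\begin{itemize}
\item Your route needs neither the explicit $G,\Theta$ of Corollary \ref{corollary4.3} nor Lemma \ref{lemma2.1}.
\item It explains why the limit is $a^{-(\alpha+2)/2}U'$: the phase is frozen while the modulus tends to a scalar.
\item It makes the kernel-level norm convergence explicit. The paper only shows that the kernel formulas converge and leaves implicit the step to norm convergence, namely norm-continuity of $(c,\lambda)\mapsto cK_\lambda$.
\item The same viewpoint would also handle Theorem \ref{theorem3.1}. There the parameter $s_n=\frac{1-a}{a}\left(\frac{1+a}{2a}\right)^n$ of \eqref{eq7} tends to $\infty$, so one would need $A_s\to 0$ strongly as $s\to\infty$ instead.
\item The paper's computational route gives explicit formulas for $\widetilde{C_\phi^*}^{(n)}K_\omega$ at every stage, at the cost of heavier algebra that depends on the unproved details of Lemma \ref{lemma4.1}.
\end{itemize}
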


\begin{proof}
    We first show the convergence of $a\widetilde{C_{\phi}^*}^{(n)}$ on $\mathcal{A}_\alpha^2(\mathbb{D})$.
    From Corollary \ref{corollary4.3}, we have
    $$\widetilde{C_{\phi}^*}^{(n)}K_\omega=C_{\Theta}^* T_{G}^*K_{\omega}=\overline{G(\omega)}K_{\Theta(\omega)}$$
    where $K_\omega$ is given by \eqref{K_w}. Therefore,
%  \begin{eqnarray*}
% && \widetilde{C_{\phi}^*}^{(n)}K_\omega \\
% &=&\left(\frac{2^{n+1}}{-(1-a)((1+a)^n -2^n)\overline{\omega} +(1+a)((1-a)(1+a)^{n-1}+2^n)}\right)^{\alpha+2}
%  \\&& \left(\frac{1}{1-\frac{-(1+a)((1-a)(1+a)^{n-1}-2^n)\overline{\omega}+(1-a)((1+a)^n +2^n)}{-(1-a)((1+a)^n -2^n)\overline{\omega} +(1+a)((1-a)(1+a)^{n-1}+2^n)}z}\right)^{\alpha+2}\\
% %         %=&\left(\frac{2^{n+1}}{-(1-a)((1+a)^n -2^n)\overline{\omega} +(1+a)((1-a)(1+a)^{n-1}+2^n)+(1+a)((1-a)(1+a)^{n-1}-2^n)\overline{\omega}z-(1-a)((1+a)^n +2^n)z}\right)\\
%     &=& \left(\frac{2}{-(1-a)(t^n -1)\overline{\omega}+(1-a)t^n+(1+a)+(1-a)t^n\overline{\omega}z-(1+a)\overline{\omega}z-(1-a)(t^n+1)z}\right)^{\alpha+2}
    % \end{eqnarray*}
    
    \begin{eqnarray*}
    \begin{split}
    &\widetilde{C_{\phi}^*}^{(n)}K_\omega\\=&\left(\frac{2^{n+1}}{-(1-a)((1+a)^n -2^n)\overline{\omega} +(1+a)((1-a)(1+a)^{n-1}+2^n)}\right)^{\alpha+2}\\& \left(\frac{1}{1-\frac{-(1+a)((1-a)(1+a)^{n-1}-2^n)\overline{\omega}+(1-a)((1+a)^n +2^n)}{-(1-a)((1+a)^n -2^n)\overline{\omega} +(1+a)((1-a)(1+a)^{n-1}+2^n)}z}\right)^{\alpha+2}\\
        %=&\left(\frac{2^{n+1}}{-(1-a)((1+a)^n -2^n)\overline{\omega} +(1+a)((1-a)(1+a)^{n-1}+2^n)+(1+a)((1-a)(1+a)^{n-1}-2^n)\overline{\omega}z-(1-a)((1+a)^n +2^n)z}\right)\\
      \small =&\left(\frac{2}{-(1-a)(t^n -1)\overline{\omega}+(1-a)t^n+(1+a)+(1-a)t^n\overline{\omega}z-(1+a)\overline{\omega}z-(1-a)(t^n+1)z}\right)^{\alpha+2}
        \end{split}
    \end{eqnarray*}
    by taking $t=\frac{1+a}{2}$.
    Now, \begin{eqnarray*}
         T_{\left({\frac{2\sqrt{a}}{-(1-a)z+(1+a)}}\right)^{\alpha+2}}C_{\frac{(1+a)z-(1-a)}{-(1-a)z(1+a)}}K_{\omega}
         &=&C_{\frac{(1+a)z+(1-a)}{(1-a)z+(1+a)}}^*T_{\left({\frac{2\sqrt{a}}{(1-a)z+(1+a)}}\right)^{\alpha+2}}^*K_\omega \ \mbox{(by Lemma}\ \ref{lemma2.1})\\
         &=&\left(\frac{2\sqrt{a}}{(1-a)\overline{\omega}+(1+a)}\right)^{\alpha+2}\left(\frac{1}{1-{\frac{(1+a)\overline{\omega}+(1-a)}{(1-a)\overline{\omega}+(1+a)}}z}\right)^{\alpha+2}\\
         &=&\left(\frac{2\sqrt{a}}{(1-a)\overline{\omega}+(1+a)-(1+a)\overline{\omega}z-(1-a)z}\right)^{\alpha+2}.
     \end{eqnarray*}
     Since $0<a<1$, we have, $0<t<1$, and therefore, $\displaystyle \lim_{n\to\infty}t^n=0$. Thus, 
     \begin{equation*}
         \left\|a^{\frac{\alpha+2}{2}}\widetilde{C_{\phi}^*}^{(n)}K_\omega-T_{\left({\frac{2\sqrt{a}}{-(1-a)z+(1+a)}}\right)^{\alpha+2}}C_{\frac{(1+a)z-(1-a)}{-(1-a)z(1+a)}}K_\omega\right\|
     \end{equation*} tends to $0$ as $n\to \infty$ for all $\omega \in \mathbb{D}$. Since span$\{K_\omega\}$ is dense in $\mathcal{A}_\alpha^2(\mathbb{D)}$ and $||a^{\frac{\alpha+2}{2}}\widetilde{C_{\phi}^*}^{(n)}||=1$, hence bounded by proof of Corollary \ref{corollary4.2}(ii), therefore $\{a^{\frac{\alpha+2}{2}}\widetilde{C_{\phi}^*}^{(n)}\}$ converges in the strong operator topology to $T_{\left({\frac{2\sqrt{a}}{-(1-a)z+(1+a)}}\right)^{\alpha+2}}C_{\frac{(1+a)z-(1-a)}{-(1-a)z(1+a)}}$ which is unitary from proof of Lemma \ref{lemma4.1}, and hence normal. Thus, $\{a\widetilde{C_{\phi}^*}^{(n)}\}$ converges to the normal operator $\frac{1}{a^{\alpha/2}}T_{\left({\frac{2\sqrt{a}}{-(1-a)z+(1+a)}}\right)^{\alpha+2}}C_{\frac{(1+a)z-(1-a)}{-(1-a)z(1+a)}}$ in the strong operator topology. Also, we have from Theorem \ref{theorem4.1}, for any $h\in H^2(\beta_\alpha)$,
    \begin{eqnarray*}
     \widetilde{C_\sigma}^{(n)}(h(z))&=&h(0)\oplus V\left(a\widetilde{C_{\phi}^*}^{(n)}\right)V^*(h(z)-h(0))
     \end{eqnarray*}
        which converges to
        \begin{eqnarray*}
        && h(0) \oplus V\left(\frac{1}{a^{\alpha/2}}T_{\left({\frac{2\sqrt{a}}{-(1-a)z+(1+a)}}\right)^{\alpha+2}}C_{\frac{(1+a)z-(1-a)}{-(1-a)z(1+a)}}\right)V^*(h(z)-h(0))\\
     &=&\left((K_0\otimes K_0)\oplus V\left(\frac{1}{a^{\alpha/2}}T_{\left({\frac{2\sqrt{a}}{-(1-a)z+(1+a)}}\right)^{\alpha+2}}C_{\frac{(1+a)z-(1-a)}{-(1-a)z(1+a)}}\right)V^*\right)h.
    \end{eqnarray*}
    where the operator $f\otimes g$ is defined by $(f\otimes g)h=\langle h,g \rangle f$ where $f,g$ are elements of a Hilbert space. Thus, $\widetilde{C_\sigma}^{(n)}$ converges to the normal operator $$(K_0\otimes K_0)\oplus V\left(\frac{1}{a^{\alpha/2}}T_{\left({\frac{2\sqrt{a}}{-(1-a)z+(1+a)}}\right)^{\alpha+2}}C_{\frac{(1+a)z-(1-a)}{-(1-a)z(1+a)}}\right)V^*$$ in the strong operator topology.
\end{proof}

We now discuss the convergence of iterated Aluthge transform of $C_\sigma^*$ on $H^2(\beta_\alpha)$.

\begin{corollary}\label{corollary4.4}
    Let $\sigma(z)=\frac{az}{-(1-a)z+1}$ where $0<a<1$, $\alpha>-1$ and $\beta_\alpha(j)$ be defined by \eqref{beta}. Then $\widetilde{C_{\sigma}^*}^{(n)}$ defined on $H^2(\beta_\alpha)$ converges to the normal operator $K_0\otimes K_0$ in the strong operator topology.
\end{corollary}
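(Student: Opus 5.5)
The plan is to mirror the proof of Theorem \ref{theorem4.1}, with $C_\phi$ in place of $C_\phi^*$, and then feed in Theorem \ref{theorem3.1} instead of the convergence of $a^{\frac{\alpha+2}{2}}\widetilde{C_\phi^*}^{(n)}$.

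\emph{Step 1: block form of $C_\sigma^*$.} In the proof of Theorem \ref{theorem4.1} we obtained
\[
C_\sigma=1\oplus V(aC_\phi^*)V^*
\]
on $(zH^2(\beta_\alpha))^\perp\oplus zH^2(\beta_\alpha)$. Here $(zH^2(\beta_\alpha))^\perp=\mathrm{span}\{1\}$ is reducing, since $\sigma(0)=0$. Taking adjoints gives
\[
C_\sigma^*=1\oplus V(aC_\phi)V^*,
\]
where $V$ is the unitary \eqref{V} and $C_\phi$ acts on $\mathcal{A}_\alpha^2(\mathbb{D})=H^2(\gamma_\alpha)$.

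\emph{Step 2: induction on $n$.} I will prove by induction that
\[
\widetilde{C_\sigma^*}^{(n)}=1\oplus V\bigl(a\widetilde{C_\phi}^{(n)}\bigr)V^* .
\]
The case $n=0$ is Step 1. Assume it holds for $k$. Computing exactly as in Theorem \ref{theorem4.1},
\[
|\widetilde{C_\sigma^*}^{(k)}|=1\oplus V\bigl(a|\widetilde{C_\phi}^{(k)}|\bigr)V^*,\qquad |\widetilde{C_\sigma^*}^{(k)}|^{1/2}=1\oplus V\bigl(a^{1/2}|\widetilde{C_\phi}^{(k)}|^{1/2}\bigr)V^* .
\]
The proof of Theorem \ref{theorem2.3} shows two things: $\widetilde{C_\phi}^{(k)}=U|\widetilde{C_\phi}^{(k)}|$ with the fixed unitary $U$ of \eqref{U=TC}, and $\ker\widetilde{C_\phi}^{(k)}=\{0\}$. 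Set $W'=1\oplus VUV^*$. Then $W'$ is unitary and $\widetilde{C_\sigma^*}^{(k)}=W'|\widetilde{C_\sigma^*}^{(k)}|$. Since both kernels are trivial, this is the polar decomposition. Multiplying out $|\cdot|^{1/2}W'|\cdot|^{1/2}$ blockwise yields
\[
\widetilde{C_\sigma^*}^{(k+1)}=1\oplus V\bigl(a\widetilde{C_\phi}^{(k+1)}\bigr)V^* .
\]
The one point needing care is the polar decomposition step: I must confirm that $W'$ has the right kernel, i.e. that injectivity of both blocks makes it genuinely unitary. This follows from the kernel computation in Theorem \ref{theorem2.3}.

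\emph{Step 3: passage to the limit.} By Theorem \ref{theorem3.1}, $\widetilde{C_\phi}^{(n)}\to0$ strongly on $\mathcal{A}_\alpha^2(\mathbb{D})$. Conjugation by the fixed unitary $V$ and multiplication by $a$ preserve strong convergence, so $V\bigl(a\widetilde{C_\phi}^{(n)}\bigr)V^*\to0$ strongly on $zH^2(\beta_\alpha)$. Hence for $h\in H^2(\beta_\alpha)$,
\[
\widetilde{C_\sigma^*}^{(n)}h=h(0)\oplus V\bigl(a\widetilde{C_\phi}^{(n)}\bigr)V^*(h-h(0))\to h(0)\cdot 1 .
\]
Since $\beta_\alpha(0)=1$, we have $K_0=1$ with $\|K_0\|=1$ and $h(0)=\langle h,K_0\rangle$. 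So the limit is $(K_0\otimes K_0)h$.

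Finally, $K_0\otimes K_0$ is the orthogonal projection onto $\mathrm{span}\{1\}$, which is self-adjoint and hence normal. This completes the plan.
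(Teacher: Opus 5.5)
Your proposal is correct and follows the same route as the paper: first establish $\widetilde{C_\sigma^*}^{(n)}=1\oplus V\bigl(a\widetilde{C_\phi}^{(n)}\bigr)V^*$, then apply Theorem \ref{theorem3.1} to the second summand. The only difference is that you write out the induction, using the fixed unitary $U$ of \eqref{U=TC} as the polar factor of every $\widetilde{C_\phi}^{(k)}$, which the paper leaves implicit by citing the proof of Theorem \ref{theorem4.1}; note also that $W'$ is unitary simply because $U$ and $V$ are, while the injectivity of $\widetilde{C_\phi}^{(k)}$ is what makes $W'$ the canonical polar factor.
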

\begin{proof}
    From proof of Theorem \ref{theorem4.1}, it follows
    \begin{align*}
    \widetilde{C_{\sigma}^*}^{(n)}=1 \oplus V\left(a\widetilde{C_{\phi}}^{(n)}\right)V^*
    \end{align*}
    on $(zH^2(\beta_\alpha))^\perp \oplus(zH^2(\beta_\alpha))$ where $\phi(z)=az+(1-a)$ and the unitary operator $V$ is given by \eqref{V}. For any $h\in H^2(\beta_\alpha)$,
    \begin{eqnarray*}
     \widetilde{C_\sigma^*}^{(n)}(h(z))&=&h(0)\oplus V\left(a\widetilde{C_{\phi}}^{(n)}\right)V^*(h(z)-h(0)).
     \end{eqnarray*}
     From Theorem \ref{theorem3.1}, $\widetilde{C_\phi}^{(n)}$ converges to $0$ as $n\to\infty$. Therefore $\widetilde{C_\sigma^*}^{(n)}(h(z))$ converges to $h(0)=(K_0\otimes K_0)h$ as $n\to\infty$.
    Hence the result follows.
\end{proof}

%\begin{remark}
    %From Corollaries \ref{corollary4.1} and \ref{corollary4.4}, we observe that $C_{\sigma}^*$ defined on $H^2(\beta_\alpha)$ is not hyponormal for $\alpha>0$ but $\widetilde{C_{\sigma}^*}^{(n)}$ converges to the normal operator $K_0\otimes K_0$ in the strong operator topology. Moreover,  for $\phi(z)=az+(1-a)$, since $\phi(0)=1-a\neq 0$, by Theorem 1 of \cite{zorboska1991hyponormal}, $C_\phi$ is not hyponormal on $\mathcal{A}_\alpha^2(\mathbb{D})$, but $\widetilde{C}_\phi^{(n)}$ converges to $0$ in the strong operator topology. It is well known that if $T\in B(H)$ is a $p-$hyponormal operator for $0<p<\infty$, then $T$ is also $q-$hyponormal for every $0<q\leq p$. These classes of operators provide negative answer to the Conjecture 3.6 of \cite{cho2005aluthge} for $p\geq1$.
%\end{remark}
\section{Further directions}\label{section5}
In \cite{bong2000aluthge}, Jung et al. conjectured that the sequence of iterated Aluthge transforms of a bounded linear operator on a separable, complex Hilbert space converges in the norm topology to a quasinormal operator. Later, Antezana et al. \cite{antezana2011iterated} confirmed this conjecture in the finite dimensional setting. However, Thompson (as communicated in Example 5.5 of \cite{bong2003iterated}) constructed an operator whose sequence of iterated Aluthge transforms converges in the strong operator topology but fails to converge in the norm topology, thereby disproving the conjecture in general. Consequently, Jung et al. \cite{bong2003iterated} revised the conjecture and proposed that, for operators on infinite dimensional Hilbert spaces, the sequence of the Aluthge transforms converges in the strong operator topology. This revised conjecture was further challenged when Cho et al. in \cite{cho2005aluthge}, exhibited a unilateral weighted shift for which the sequence of iterates of the Aluthge transforms does not converge even in the weak operator topology.\\
Subsequently, Jung et al. \cite{jung2015iterated} showed that, for some composition operators on the Hardy space, the iterated Aluthge transform converges in the strong operator topology to a normal operator, but does not converge in the norm topology, thus lending support to the revised conjecture raised in \cite{bong2003iterated}. In our work, we establish similar results for certain composition operators on weighted Bergman space and some weighted Hardy space. Motivated by the findings of \cite{jung2015iterated} together with our own, we pose the following questions.
\begin{question}
     Let $\phi(z)=az+(1-a)$ and $\sigma(z)=\frac{az}{-(1-a)z+1}$ where $0<a<1$. Suppose $C_\phi$ and $C_\sigma$ are bounded composition operators on the weighted Hardy space $H^2(\beta)$. Do $\widetilde{C_{\phi}}^{(n)}$ and $\widetilde{C_{\sigma}}^{(n)}$ converge in the strong operator topology to a quasinormal operator?
\end{question}

\begin{question}
    Let $T$ be a bounded composition operator on some weighted Hardy space. Does the sequence $\{\widetilde{T}^{(n)}\}$ converge to a quasinormal operator in the strong operator topology?
\end{question}

\noindent From Theorem 2 of \cite{cowen1988subnormality}, since $\phi(0)\neq0$, the operator $C_\phi$ fails to be hyponormal on the Hardy space $H^2$ and therefore cannot be $p$-hyponormal for $p\geq1$. Also from Lemma 4.8 of \cite{jung2015characterizations}, $C_\phi$ on $H^2$ is not $p$-hyponormal for $0<p\leq1$. Hence $C_\phi$ on $H^2$ is not $p$-hyponormal for $p>0$. This naturally leads us to the following question.

\begin{question}
    From Corollary \ref{corollary2.2}, we have, $C_\phi$ defined on $\mathcal{A}_\alpha^2(\mathbb{D)}$ is not $p$-hyponormal for $p\geq1$. What happens when $0<p<1$?
\end{question}

\section*{Declarations}

\noindent \textit{Author Contributions:} All authors contributed equally to this work. \\
 \textit{Data Availability :} Not applicable.\\
\textit{Conflict of interest:} The authors declare that they have no competing interests.\\

% \vspace{1cm}

% \noindent\textbf{Author Contributions} All authors contributed equally to this work.\\

% \noindent\textbf{Funding} The authors declare that no specific funding was received for this research.\\

% \noindent\textbf{Data Availability} No datasets were generated or analysed during the current study.

    %\bibliographystyle{plain}
    %\bibliography{references}
% \section*{Declarations}
% \textbf{Conflicts of interest}: The authors declare that they have no competing interests.\\
% \textbf{Availability of data:}    Not applicable.\\
    
\bibliographystyle{plain}

\end{document}